\documentclass[11pt]{amsart}
\usepackage{latexsym,amssymb,amsfonts,amsmath,graphicx,fullpage,url}
\usepackage[vcentermath,enableskew]{youngtab}
\usepackage{color}
\usepackage{cite}
\usepackage[all]{xy}
\usepackage{tikz,bbm}
\usepackage{verbatim}
\usepackage{tikz-cd}
\usepackage{latexsym,amssymb,amsfonts,amsmath,graphicx,fullpage,url, cite}
\usepackage[vcentermath,enableskew]{youngtab}
\usepackage{color}
\usepackage{cite}
\usepackage[all]{xy}
\usepackage{verbatim}
\usepackage{tikz,bbm}
\usepackage{caption}
\usepackage{subcaption}
\usepackage{hyperref}
\usepackage{bm}
\usepackage{colonequals}
\usepackage{mathtools}

\newtheorem{theorem}[subsection]{Theorem}
\newtheorem{proposition}[subsection]{Proposition}

\newtheorem{lemma}[subsection]{Lemma}
\newtheorem{corollary}[subsection]{Corollary}
\theoremstyle{definition}
\newtheorem{definition}[subsection]{Definition}
\newtheorem{example}[subsection]{Example}

\theoremstyle{remark}

\numberwithin{equation}{section}
\def\F{\mathcal{F}}
\def\vol{{\rm vol}}

\def\F{{\mathcal{F}}}

\def\R{\mathbb{R}}

\def\aff{\mathop{\mathrm{aff}}}
 
\def\m{{\bf m}}
\newcommand{\be}{\begin{equation}}
\newcommand{\ee}{\end{equation}}


\newcommand{\B}{\bigg}

\newcommand{\indeg}{\it indeg}
\newcommand{\outdeg}{\it outdeg}
\newcommand{\inn}{{\bf in}}
\newcommand{\lout}{{\bf out}}
\newcommand{\lin}{{\bf in}}
\newcommand{\Gc}{{G({\bf c})}}

\newcommand{\bd}{\begin{definition}}
\newcommand{\ed}{\end{definition}}
\newcommand{\bt}{\begin{theorem}}
\newcommand{\et}{\end{theorem}}
\newcommand{\bl}{\begin{lemma}}
\newcommand{\el}{\end{lemma}}
\newcommand{\bp}{\begin{proposition}}
\newcommand{\ep}{\end{proposition}}
\newcommand{\bc}{\begin{corollary}}
\newcommand{\ec}{\end{corollary}}



\newtheorem*{theorem1*}{Theorem \ref{thm:main}}

\newcommand{\old}[1]{}

\def\ee{{\bf e}}
  
   \def\vol{{\rm vol}}

\def\m{{\bf m}}
 \def\f_H{{\bf w}}
 \def\f{{\bf f}}
 \def\a{{\bf a}}

\def\R{\mathbb{R}}

\def\Z{\mathbb{Z}}
\def\g{{\bf a}}

 \def\F{\mathcal{F}}
\def\O{\mathcal{O}}
\def\I{\mathcal{I}}

\def\ee{{\bf e}}
  
   \def\vol{{\rm vol}}

\def\aa{{\bf a}}
 \def\f_H{{\bf w}}
 \def\f{{\bf f}}
 \def\a{{\bf a}}
\def\R{\mathbb{R}}

\def\Z{\mathbb{Z}}
\def\g{{\bf a}}

 \def\F{\mathcal{F}}
\def\O{\mathcal{O}}
\def\I{\mathcal{I}}

\def\ee{{\bf e}}
  
   \def\vol{{\rm vol}}

\def\om{{\rm out}}

\def\aa{{\bf a}}
 \def\f_H{{\bf w}}
 \def\f{{\bf f}}
 \def\a{{\bf a}}

\def\R{\mathbb{R}}

\def\Z{\mathbb{Z}}
\def\g{{\bf a}}

 \def\F{\mathcal{F}}
\def\O{\mathcal{O}}
\def\I{\mathcal{I}}

\newcommand{\eee}{\end{equation}}

\newcommand\multiset[2]%
{\mathchoice{\left(\kern-0.5em{\binom{#1}{#2}}\kern-0.5em\right)}
            {\bigl(\kern-0.3em{\binom{#1}{#2}}\kern-0.3em\bigr)}
            {\bigl(\kern-0.3em{\binom{#1}{#2}}\kern-0.3em\bigr)}
            {\bigl(\kern-0.3em{\binom{#1}{#2}}\kern-0.3em\bigr)}}

\def\R{\mathbb{R}}

\title{Counting  integer points of flow polytopes}

\author{Kabir Kapoor}
\author{Karola M\'esz\'aros}
\author{Linus Setiabrata}
\thanks{Work of M\'esz\'aros partially supported by  NSF Grant DMS 1501059  and   CAREER NSF Grant DMS 1847284 as well as by a von Neumann Fellowship at the IAS   funded by the Fund for Mathematics and the Friends of the Institute for Advanced Study.}

\address{Department of Mathematics, Cornell University, Ithaca, NY 14853}
\address{Department of Mathematics, Cornell University, Ithaca, NY 14853 and School of Mathematics, Institute for Advanced Study, Princeton, NJ 08540}
\address{Department of Mathematics, Cornell University, Ithaca, NY 14853}
 
\email{ksk86@cornell.edu, karola@math.cornell.edu, ls823@cornell.edu}

\date{\today}

\begin{document}
\maketitle

\begin{abstract}
The Baldoni--Vergne volume and Ehrhart polynomial formulas for flow polytopes are significant in at least two ways. On one hand, these formulas are in terms of Kostant partition functions, connecting flow polytopes to this classical vector partition function fundamental in representation theory. On the other hand the Ehrhart polynomials can be read off from the volume functions of flow polytopes. The latter is remarkable since the leading term of the Ehrhart polynomial of an integer polytope is its volume! Baldoni and Vergne proved these formulas via residues. To reveal the geometry of these formulas, the second author and Morales gave a fully geometric proof for the volume formula and a part generating function proof for the Ehrhart polynomial formula. The goal of the present paper is to provide a fully geometric proof for the Ehrhart polynomial formula of flow polytopes.
\end{abstract}

\section{Introduction}

Polytopes are  ubiquitous in mathematics.  Two
immediate questions about any  integer polytope $P$ are to compute its  volume and  the number of integer points  in $P$ and its dilations. The Baldoni--Vergne  formulas (Theorem \ref{thmA}) answer these questions for flow polytopes. This paper is concerned with understanding the aforementioned formulas geometrically, as their original proof \cite{bv} is via residues and only a partial geometric proof is known to date \cite{MMlidskii}.
 
 Flow polytopes are   fundamental in combinatorial optimization  \cite{schrijver, counting}.   Postnikov and Stanley discovered the  connection of volumes of flow polytopes  to Kostant partition functions (unpublished; see \cite{bv, mm}), inspiring the work of    Baldoni and Vergne  \cite{bv}. 
 Flow polytopes are also related to Schubert and  Grothendieck polynomials \cite{groth} and  the space of diagonal harmonics \cite{tesler, diag}.
 
 The connection between flow polytopes and Kostant partition functions is a motivating force of this paper. While it is abundantly clear from the definition of a flow polytope (given in \eqref{eq:deff} below) that the number of its integer points is an enumeration of the Kostant partition function, the relation of its volume to the Kostant partition function is less than obvious. Before we explain the above, we define the Kostant partition function and highlight its importance.
 
 The \textbf{Kostant partition function} $K_n({\bf a})$ of type $A_n$ is  the number of ways to write  the  vector  ${\bf a}=(a_1, \ldots, a_{n+1}) \in \Z^{n+1}$ as a nonnegative integral combination of the vectors $e_i-e_j$ for   $1\leq i<j\leq n+1$,  where $e_i$ is the $i$-th standard basis vector in $\R^{n+1}$. It is a special vector partition function introduced by Bertram Kostant in 1958 in order to get an expression for the multiplicity of a weight of an irreducible representation of a semisimple Lie algebra \cite{kostant2, kostant1}, now known as the Weyl character formula. Kostant partition functions   appear not only in representation theory, but in algebraic combinatorics, toric geometry and approximation theory, among other areas.

 The Kostant partition function is a piecewise polynomial function \cite{sturm}, whose domains of polynomiality are maximal convex cones in the common refinement of all triangulations of the convex hull of the positive roots \cite{deloerasturm}. Despite the above description of the  domains of polynomiality of the Kostant partition function, enumerating these domains has remained elusive \cite{ubiquity, deloerasturm, GMP}. In this paper we will be concerned with the Kostant partition function and its generalizations evaluated at vectors ${\bf a}=(a_1, \ldots, a_{n+1}) \in \Z^{n+1}$, where $a_i \geq 0$ for all $i \in [n]$. These vectors form the ``nice chamber" \cite{bv}, which is a distinguished  domain of polynomiality of the Kostant partition function.

  We now define our main geometric object, the flow polytope, as well as relate it to the generalized Kostant partition function $K_G(\cdot)$ defined below.

Let $M_G$ denote the incidence matrix of the graph $G$  on the vertex set $[n+1]$; that is let the columns of $M_G$ be the vectors $e_i-e_j$ for $(i,j)\in E(G)$, $i<j$,  where $e_i$ is the $i$-th standard basis vector in $\R^{n+1}$. Then, the {\bf flow polytope} $\mathcal{F}_G({\bf a})$ associated to the graph $G$ and the {\bf netflow vector} ${\bf a}=(a_1, \ldots, a_{n+1}) \in \Z^{n+1}$ is defined as  
	\begin{equation} \label{eq:deff}\mathcal{F}_G({\bf a})= \{f\in \R^{|E(G)|}_{\geq 0}:\, M_Gf={\bf a} \}. \end{equation}

The flows $f$ given in \eqref{eq:deff} are also referred to as $\bf{a}$-flows on the graph $G$. 

 The   {\bf normalized volume} of a
$d$-dimensional polytope $P \subset \mathbb{R}^n$ is the volume form $\vol(\cdot)$ that assigns a volume of one to the smallest $d$-dimensional simplex whose vertices are in the lattice equal to the intersection of $\Z^n$ with the affine span of the polytope $P$.  
  The number of lattice points of  the {$t^{th}$ dilate} of ${P} \subset \R^n$, $t {P}:=\{(tx_1, \ldots, tx_{n}) \mid  (x_1, \ldots, x_{n}) \in {P}\}$, is given by the {\bf Ehrhart function} ${\rm Ehr}({P}, t)$.
 If ${P}$ has  integral vertices then ${\rm Ehr}({P}, t)$ is a  polynomial. The leading coefficient of the Ehrhart polynomial ${\rm Ehr}({P}, t)$ is $\dim(P)! \vol(P)$.
  
 Note that the number of integer points in $\mathcal{F}_G({\bf a})$ is exactly the number of ways to write ${\bf a}$ as a nonnegative integral combination of the vectors $e_i-e_j$ for edges $(i,j)$ in $G$, $i<j$, that is the \textbf{generalized Kostant partition function} $K_G({\bf a})$. 	It thus follows that  ${\rm Ehr}(\F_{G}({\bf a}),t)=K_G(t {\bf a})$.  The classical Kostant  partition function   $K_n({\bf a})$ corresponds to the case of the complete graph $K_{n+1}$. Following Baldoni and Vergne \cite{bv } for brevity we will simply refer to the generalized Kostant partition function as the Kostant partition function.

  The magic of the  Baldoni--Vergne formulas is that  for flow polytopes $\F_{G}({\bf a})$, their Ehrhart polynomial \newline ${\rm Ehr}(\F_{G}({\bf a}),t)=K_G(t {\bf a})$ can be deduced from their volume function!

\begin{theorem}[Baldoni--Vergne  formulas {\cite[Thm. 38]{bv}}]
\label{thmA} 
Let $G$ be a connected graph on the vertex set  $[n+1]$, with $m = |E(G)|$ edges directed $i\to j$ when $i < j$, with at least one outgoing edge at
vertex $i$ for $i=1,\ldots,n$, and let
$\aa=(a_1,\ldots,a_n,-\sum_{i=1}^n a_i)$, $a_i \in \mathbb{Z}_{\geq 0}$. Then
\begin{align} \label{eq:vol}
\vol \F_G({\bf a}) &= \sum_{{\bf j}}
\binom{m-n}{j_1,\ldots,j_n} a_1^{j_1}\cdots
a_n^{j_n}\cdot K_{G}\left(j_1-\lout_G(1), \ldots, j_n - \lout_G(n),0\right),     \\
K_{G}({\bf a}) &= \sum_{{{\bf j}}}
\multiset{a_1-\lin_G(1)}{j_1}\cdots
\multiset{a_{n}-\lin_G(n)}{j_{n}} \cdot  K_{G}\left(j_1-\lout_G(1), \ldots, j_n - \lout_G(n),0\right), \label{eq:kost}    
\end{align} 
for $\lout_G(i)=\outdeg_G(i)-1$ and $\lin_G(i)=\indeg_G(i)-1$ where $\outdeg_G(i)$ and $\indeg_G(i)$ denote
the outdegree and indegree of  vertex $i$ in $G$. Each sum is over weak
compositions ${\bf j}=(j_1,j_2,\ldots,j_n)$ of $
m-n$ that are $\geq (\lout_G(1),\ldots,\lout_G(n))$ in dominance order (that is $\sum_{k=1}^l j_k \geq \sum_{k=1}^l \lout_G(k)$ for all $l \in [n]$) and $\multiset{n}{k}:=\binom{n+k-1}{k}$.
\end{theorem}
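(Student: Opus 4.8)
The plan is to refine the geometric proof of the volume formula~\eqref{eq:vol} from \cite{MMlidskii} so that it also counts lattice points. That proof proceeds by a polyhedral subdivision of $\F_G(\aa)$ whose full-dimensional cells $C_{\mathbf j,\mathbf b}$ are indexed by a weak composition $\mathbf j=(j_1,\dots,j_n)$ of $m-n$ dominating $(\lout_G(1),\dots,\lout_G(n))$ together with an integer flow $\mathbf b$ on $G$ of netflow $(j_1-\lout_G(1),\dots,j_n-\lout_G(n),0)$ --- so that $\mathbf b$ ranges over a set of size $K_G(j_1-\lout_G(1),\dots,j_n-\lout_G(n),0)$ for each fixed $\mathbf j$ --- with each $C_{\mathbf j,\mathbf b}$ affinely isomorphic to the product of dilated standard simplices $a_1\Delta^{j_1}\times\cdots\times a_n\Delta^{j_n}$; this produces the summand $\binom{m-n}{j_1,\dots,j_n}a_1^{j_1}\cdots a_n^{j_n}\cdot K_{G}(\dots)$ of~\eqref{eq:vol}. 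The first step is to verify that the affine isomorphism $C_{\mathbf j,\mathbf b}\cong a_1\Delta^{j_1}\times\cdots\times a_n\Delta^{j_n}$ is in fact unimodular for the lattice $\Z^{|E(G)|}\cap\aff(\F_G(\aa))$, so that lattice-point counts, not merely volumes, may be transported cellwise; this should follow from the explicit description of the cells as fibers of the flow-reduction maps underlying the subdivision.

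The heart of the argument is the passage to a \emph{half-open} version of the subdivision. Assigning each interior wall to exactly one of the two cells it bounds, one writes $\F_G(\aa)=\bigsqcup_{\mathbf j,\mathbf b}C^{\circ}_{\mathbf j,\mathbf b}$ as a disjoint union of half-open cells, whence $K_G(t\aa)=\Ehr(\F_G(\aa),t)=\sum_{\mathbf j,\mathbf b}\Ehr(C^{\circ}_{\mathbf j,\mathbf b},t)$ for all $t\ge 1$, with no inclusion--exclusion between cells. It then remains to show $\Ehr(C^{\circ}_{\mathbf j,\mathbf b},t)=\prod_{i=1}^n\multiset{ta_i-\lin_G(i)}{j_i}$; summing over all cells and grouping by $\mathbf j$ gives~\eqref{eq:kost} with $\aa$ replaced by $t\aa$, and setting $t=1$ (and invoking polynomiality of $K_G$ on the nice chamber to cover netflows with some $a_i=0$) yields~\eqref{eq:kost}, while~\eqref{eq:vol} reappears as its top-degree part in $t$. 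The substance of this last step is a combinatorial description of the half-open structure cell by cell: under $C_{\mathbf j,\mathbf b}\cong a_1\Delta^{j_1}\times\cdots\times a_n\Delta^{j_n}$, the walls removed from the $i$-th factor are governed by the $\indeg_G(i)$ edges entering vertex $i$, and the resulting half-open dilated simplex contributes $\binom{ta_i-\indeg_G(i)+j_i}{j_i}=\multiset{ta_i-\lin_G(i)}{j_i}$ lattice points --- a count that must be read through inclusion--exclusion over the removed faces, since $\multiset{a_i-\lin_G(i)}{j_i}$ is a signed quantity when $a_i<\indeg_G(i)$, and literally ``removing $\indeg_G(i)$ facets'' from $a_i\Delta^{j_i}$ is impossible once $\indeg_G(i)>j_i+1$.

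The hard part will be precisely this combinatorial description of the walls together with the accompanying inclusion--exclusion. One must trace the flow-reduction maps of \cite{MMlidskii} closely enough to show that the cells adjacent to $C_{\mathbf j,\mathbf b}$ across a wall transverse to the $i$-th simplex factor are exactly those obtained by rerouting one unit of flow along an incoming edge of vertex $i$, identify the face of $a_i\Delta^{j_i}$ that each such wall cuts off, and verify that the alternating sum over these faces collapses, factor by factor, to $\multiset{ta_i-\lin_G(i)}{j_i}$ --- thereby reproducing the cancellations already visible on the right-hand side of~\eqref{eq:kost}. An equivalent and possibly cleaner route, avoiding the abstract half-open bookkeeping, is a vertex-by-vertex \emph{reduction}: fiber $\F_G(\aa)$ over a simplex recording how flow leaves vertex $1$, write its lattice-point count as a sum over the integer points of that simplex of the lattice-point counts of the fibers (which are flow polytopes on $G$ with vertex $1$ deleted and shifted netflow), and iterate over vertices $1,2,\dots,n$, with the corrections $\indeg_G(i)-1$ emerging from the change in netflow at the out-neighbors of vertex $i$ upon its deletion. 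Either way, the same underlying identity is the crux.
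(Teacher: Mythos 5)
Your proposal takes a genuinely different route from the paper's, and as written it has a gap at the crucial step.

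The paper does not try to make the canonical subdivision of $\F_G(\aa)$ count lattice points directly, so it never confronts the boundary-overcounting problem that forces you into half-open machinery. Instead it reinterprets the \emph{left}-hand side of \eqref{eq:kost} as a normalized volume of an auxiliary flow polytope. Given $\cc=(c_1,\dots,c_n)\in\Z^n_{>0}$ with $a_i=\lin_G(i)+c_i$, one adjoins a new source vertex $0$ to $G$ with $c_i$ parallel edges $(0,i)$, obtaining $\Gc$ on $[0,n+1]$. Since the source has netflow $0$, restriction of flows gives a trivial bijection showing $K_G(\aa)=K_{\Gc}(0,\lin_{\Gc}(1),\dots,\lin_{\Gc}(n),-\sum_i\lin_{\Gc}(i))$, and the already-proved (and already geometric) volume formula \eqref{eq:vol}, applied to $\Gc$ with the netflow $e_1-e_{n+2}$, identifies this number with $\vol\F_{\Gc}(e_1-e_{n+2})$. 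The right-hand side of \eqref{eq:kost} then appears by dissecting $\F_{\Gc}(e_1-e_{n+2})$ into unimodular simplices via the Subdivision Lemma and simply \emph{counting simplices}: the $\frac{(c_i)_{j_i}}{j_i!}=\multiset{c_i}{j_i}=\multiset{a_i-\lin_G(i)}{j_i}$ factors come from counting noncrossing bipartite trees when the extra vertex $0$ is reduced away. No half-open bookkeeping and no inclusion--exclusion ever enter.

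The gap in your approach is the claimed cellwise identity $\Ehr(C^{\circ}_{\mathbf j,\mathbf b},t)=\prod_i\multiset{ta_i-\lin_G(i)}{j_i}$. A genuine half-open cell inside $C_{\mathbf j,\mathbf b}\cong ta_1\Delta^{j_1}\times\cdots\times ta_n\Delta^{j_n}$ has lattice-point count $\prod_i\binom{ta_i+j_i-k_i}{j_i}$ with $0\le k_i\le j_i+1$ facets removed from the $i$-th factor; your formula requires $k_i=\indeg_G(i)$, yet there is no reason for $\indeg_G(i)\le j_i+1$ to hold, and it fails on genuine full-dimensional cells. For instance, take $G$ on $[6]$ with edges $(1,2),(1,3),(1,4),(2,4),(3,4),(4,5),(4,6),(5,6)$, so $\lout_G=(2,0,0,1,0)$ and $\indeg_G(4)=3$; the composition $\mathbf j=(2,0,0,1,0)$ dominates $\lout_G$, gives a cell with $j_4=1$, and the target value $\multiset{ta_4-2}{1}=ta_4-2$ is not the lattice-point count of any half-open interval inside $[0,ta_4]$, which can only be $ta_4+1$, $ta_4$, or $ta_4-1$. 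You flag this yourself and defer to an unspecified ``inclusion--exclusion over the removed faces,'' but once you allow inclusion--exclusion the half-open decomposition has lost its whole point, and the displayed cellwise identity is simply false as stated. Some replacement for this step --- or a different decomposition altogether, as the paper uses --- is needed before the argument can close.
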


 The proof provided by Baldoni--Vergne \cite{bv} for Theorem \ref{thmA} relies on residue computations, leaving the combinatorial nature of their formulas a mystery. The aim of the authors in  \cite{MMlidskii}   was to demystify Theorem \ref{thmA}  by proving it via polytopal subdivisions of $\F_G({\bf a})$.    They do this  by  constructing  a special subdivision of $\F_G({\bf a})$ referred to as the {\it canonical subdivision}, which allows for a geometric computation of  the volume of $\F_G({\bf a})$. In order to deduce   \eqref{eq:kost} the   generating functions of the Kostant partition functions   are also used in  \cite{MMlidskii}.  While the use of the aforementioned generating functions in \cite{MMlidskii}  is natural, our goal and result in the present paper is to avoid them and    give a purely geometric proof of  \eqref{eq:kost}.
 
  \bigskip
  
\noindent \textbf{Outline of the paper.} Section \ref{sec:def} explains subdivisions of flow polytopes,  Section \ref{sec:?} provides further polytopal insights and  Section \ref{sec:geom} provides a new, completely geometric proof of   \eqref{eq:kost}. We conclude in Section \ref{sec:con} with general remarks.

\section{Subdividing flow polytopes}
\label{sec:def}
   	
The guiding principle beneath subdivisions of polytopes is a simple one: we aim to subdivide polytopes into smaller ones in hopes of using our understanding of the smaller polytopes to gain understanding of the polytope we started with. For example, we may be interested in the volume of a polytope $P$, and one way to calculate it would be if we could count the top dimensional simplices of a unimodular triangulation of $P$ (provided one exists). This is exactly what Morales and the second author of this paper accomplish for flow polytopes in order to prove their volume formula \eqref{eq:vol} geometrically in \cite{MMlidskii}. However,  understanding the top dimensional  	simplices of a unimodular triangulation of $P$ is not sufficient for counting the number of integer points of $P$, since we cannot simply sum over the top dimensional simplices as we do for volume! This is why  getting a geometric proof of \eqref{eq:kost} requires further insights. The main insight is the realization that we can reinterpret the left hand side of  \eqref{eq:kost} as a volume of a flow polytope (different from $\F_G(\g)$), and then the right hand side can be obtained by summing volumes of polytopes in  a subdivision of our new flow polytope.  This way we do not have the issue of overcounting integer points on the intersections of the polytopes in a subdivision of  $\F_G(\g)$! More details on this construction are coming  in Section \ref{sec:?}. This section is devoted to reviewing and generalizing the subdivision construction of \cite{MMlidskii}, whose exposition we follow. 

\medskip

The crucial lemma  that we are building up to in this section  is the \textit{Subdivision Lemma}, Lemma \ref{lem:sub}. The following sequence of definitions are necessary in order to understand the right hand side of  \eqref{eq:?}.

\medskip

A {\bf bipartite noncrossing tree}  is a  tree with a
distinguished bipartition of vertices into {\bf left vertices}
$x_1,\ldots,x_{\ell}$ and {\bf right vertices $x_{\ell+1},\ldots,
  x_{\ell+r}$} with no pair of edges $(x_p,x_{\ell+q}),
(x_t,x_{\ell+u})$ where $p<t$ and $q>u$. 
Denote by
$\mathcal{T}_{L,R}$ the set of  bipartite noncrossing trees
where $L$ and $R$ are the ordered
sets $(x_1,\ldots,x_{\ell})$ and $(x_{\ell+1},\ldots,x_{\ell+r})$ respectively. Note that $\#
\mathcal{T}_{L,R}=\binom{\ell+r-2}{\ell-1}$.

\begin{figure} 
\begin{center}\includegraphics[scale=.75]{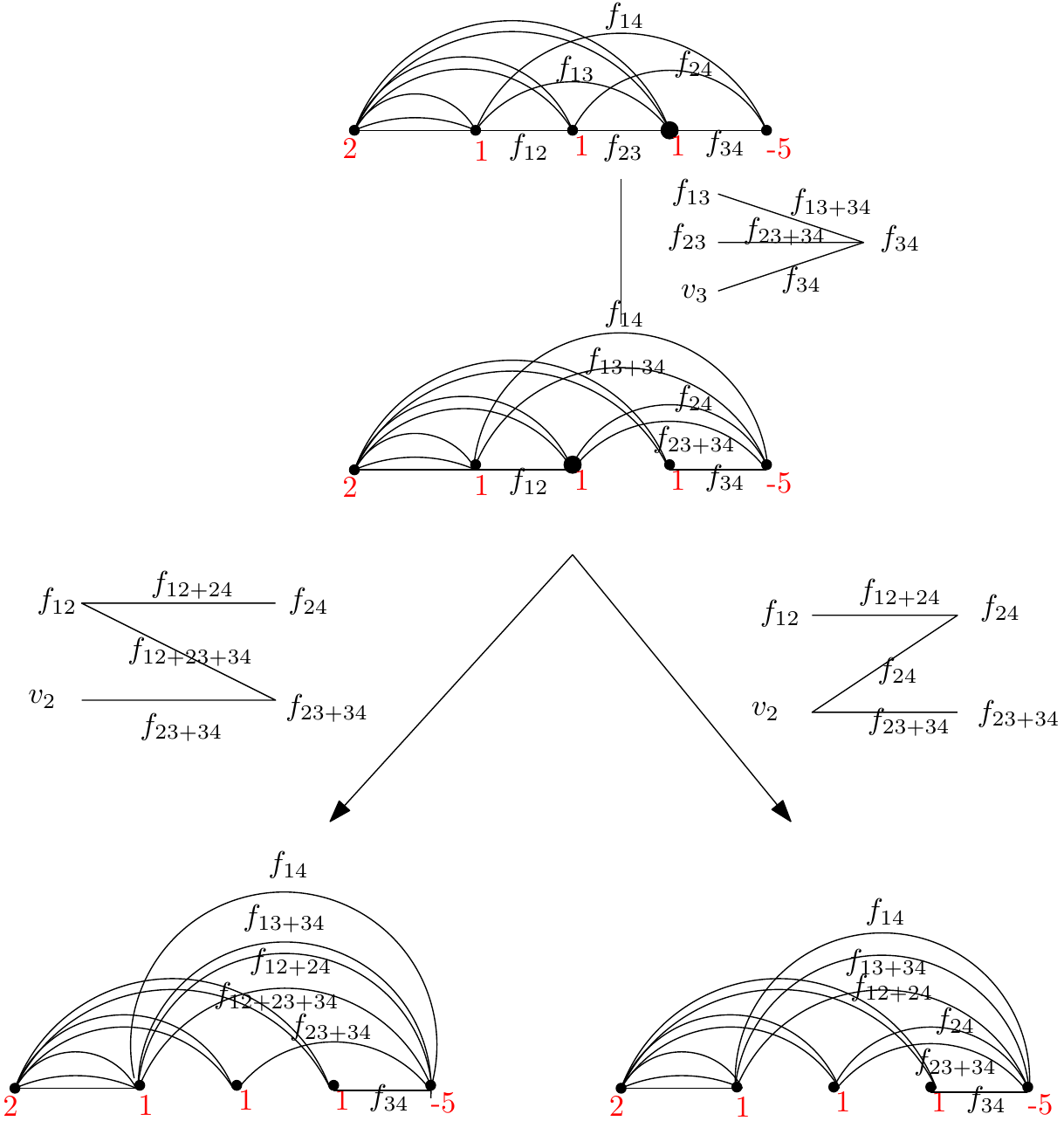}\end{center}
\caption{Reduction tree with root labeled by the graph $K_4(3,2,2):=([0,4], E(K_4)\cup \{\{(0,1), (0,1), (0,1), (0,2), (0,2), (0,3), (0,3)\}\})$. The notation $f_{ij+jk}$ stands for the formal sum of edges $f_{ij}+f_{jk}$.  The vertex
  of the graph
  where the     reduction is taking place is enlarged. The flow polytopes corresponding to the leaves of the     reduction tree  dissect the flow polytope corresponding to the root of the tree, see Lemma~\ref{lem:sub}.}
\label{fig:complete}
\end{figure}

 Consider a graph $G$ on the vertex set $[n+1]$ with edges oriented from smaller to larger vertices and an integer netflow
vector $\g=(a_1, \ldots, a_n, -\sum_i a_i)$, with  $a_i\geq 0$, $i \in [n]$.  Pick an arbitrary vertex
$i$,$1<i<n+1$, of $G$ as well as a submultiset  $\mathcal{I}_i$ of the  multiset of {incoming edges} to $i$ and submultiset $\mathcal{O}_i$ of the multiset of {outgoing edges} from $i$.  Given an ordering on the sets $\mathcal I_i$ and $\mathcal O_i$ and a bipartite noncrossing tree $T \in \mathcal T_{\mathcal I_i\cup\{i\},\mathcal O_i}$, where $\mathcal I_i\cup\{i\}$ is ordered according to the order on $\mathcal I_i$ with $i$ appended as its last element,we describe the construction of new graphs $G^{(i)}_T(\mathcal{I}_i, \mathcal{O}_i)$ from $G$ as follows.

For each tree-edge $(e_1,e_2)$ of $T$ where $e_1=(r,i) \in \mathcal{I}_i$ and $e_2=(i,s)\in \mathcal{O}_i$ let $edge(e_1,e_2)=(r,s)$ and we let  $edge(i,
(i,j))=(i,j)$. We think of $edge(e_1,e_2)$ as a formal {\bf sum of the edges} $e_1$ and $e_2$, where $edge(i,
(i,j))=(i,j)$ as the edge $(i,j)$.

The graph $G^{(i)}_T(\mathcal{I}_i, \mathcal{O}_i)$ is then defined as the graph obtained from $G$
by deleting all edges in $\I_i \cup \O_i$ of $G$  and adding  the
multiset of edges $\{\{edge(e_1,e_2) ~|~ (e_1,e_2)\in E(T)\}\} \cup \{\{edge(i,(i,j)) ~|~ (i,(i,j))\in E(T)\}\}.$ 
 See Figure~\ref{fig:complete}.

\medskip

The difference in the above and that of \cite[Section 3]{MMlidskii} is that in \cite{MMlidskii} the multisets $\mathcal{I}_i$ and $\mathcal{O}_i$ are always taken to equal to the multiset of incoming and the multiset of outgoing edges of $i$, whereas here we allow them to be proper submultisets of the multiset of incoming and the multiset of outgoing edges of $i$. Note that in the example on  Figure \ref{fig:complete} we have that 
$\mathcal{I}_3$ and $\mathcal{I}_2$ are proper subsets of the incoming edges at vertices 3, and 2, respectively. 

  The \textit{Subdivision Lemma}, Lemma \ref{lem:sub}, states that $\F_G(\g)$ is a union  over $T \in \mathcal{T}_{\mathcal{I}_i \cup \{i\},\mathcal{O}_i}$ of the smaller polytopes  $\phi_T(\F_{G^{(i)}_T(\mathcal{I}_i, \mathcal{O}_i)}(\g)),$ where $\phi_T$ is an  integral equivalence between $\F_{G^{(i)}_T(\mathcal{I}_i, \mathcal{O}_i)}(\g)$ and its image $\phi_T(\F_{G^{(i)}_T(\mathcal{I}_i, \mathcal{O}_i)}(\g))$. We now define integrally equivalence of polytopes and   the maps $\phi_T$. 
  
Two polytopes $P_1\subseteq \R^{k_1}$ and ${P_2\subseteq \R^{k_2}}$ are \textbf{integrally equivalent} if there is an affine transformation $t:\R^{k_1}\to \R^{k_2}$ that is a bijection $P_1\to P_2$ and a bijection $\aff(P_1)\cap \mathbb{Z}^{k_1}\to \aff(P_2)\cap \mathbb{Z}^{k_2}$. Integrally equivalent polytopes have the same face lattice, volume, and Ehrhart polynomial. 

 Fix $T \in \mathcal{T}_{\mathcal{I}_i \cup \{i\},\mathcal{O}_i}$.  Recall that  $\F_{G^{(i)}_T(\mathcal{I}_i, \mathcal{O}_i)}(\g) \subset \R^{|E(G^{(i)}_T(\mathcal{I}_i, \mathcal{O}_i))|}$ and denote  the coordinates of  $\R^{|E(G^{(i)}_T(\mathcal{I}_i, \mathcal{O}_i))|}$ by $({\rm coord}_e)_{e \in E(G^{(i)}_T(\mathcal{I}_i, \mathcal{O}_i))}$; moreover,  $\F_G(\g)\subset \R^{|E(G)|}$ and denote  the coordinates of  $\R^{|E(G)|}$ by $({\rm coord}_d)_{d \in E(G)}$.  Recall that each edge of $e \in G^{(i)}_T(\mathcal{I}_i, \mathcal{O}_i)$ is a sum of (one or more) edges of  the
original graph $G$; denote by $s(e)$ the subset of edges of $G$ which we sum in order to get $e$. 
Define the affine transformation $\phi_T:\R^{|E(G^{(i)}_T(\mathcal{I}_i, \mathcal{O}_i))|}\to \R^{|E(G)|}$ via   
\[
\phi_T(({\rm c}_e)_{e \in E(G^{(i)}_T(\mathcal{I}_i, \mathcal{O}_i))})=({\rm c}_d)_{d \in E(G)}, \qquad \text{where} \qquad {\rm c}_d=\sum_{e: d \in s(e) }{\rm c}_e.
\]
Note that $\phi_T$ is an invertible linear map between vector spaces of the same dimension which restricts to a bijection on the underlying lattice. Therefore $\phi_T$ is an integral equivalence between $\mathcal F_{G_T^{(i)}}(\mathcal I_i, \mathcal O_i)$ and its image in $\R^{|E(G)|}$. By definition,  $\phi_T(\F_{G^{(i)}_T(\mathcal{I}_i, \mathcal{O}_i)}(\a))\subseteq \F_{G}(\a)$. An illustration of the map $\phi_T$ appears on Figure~\ref{fig:phiT}.

\begin{figure} 
\begin{center}\includegraphics[scale=.75]{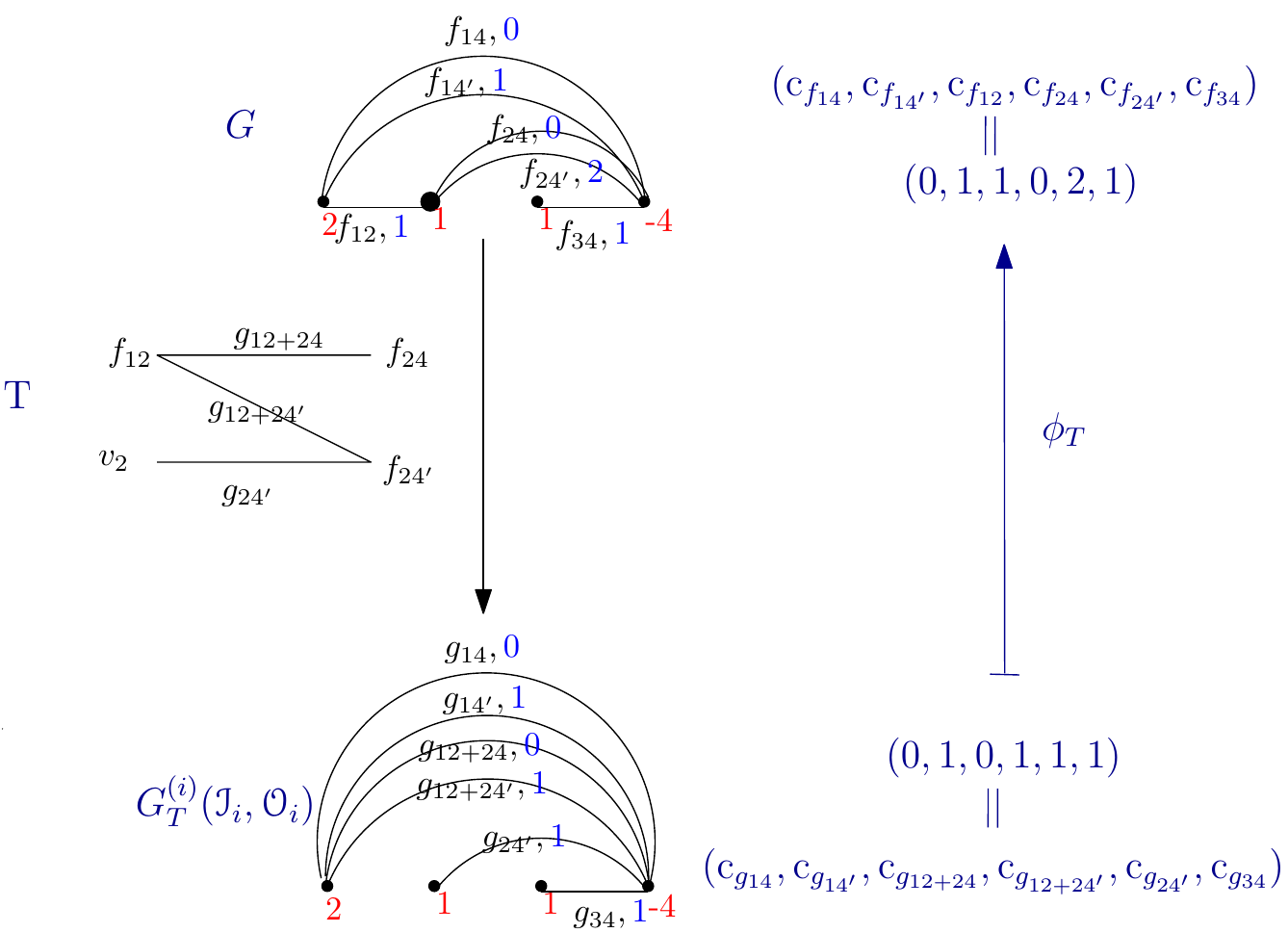}\end{center}
\caption{Illustration of  $\phi_T (0,1,0,1,1,1)=(0,1,1,0,2,1)$. Note that we ordered the coordinates of $\R^{|E(G^{(i)}_T(\mathcal{I}_i, \mathcal{O}_i))|}$ (where $i=2$, $\mathcal{I}_i=(f_{1,2})$ and $\mathcal{O}_i)=(f_{24}, f_{24'})$) by ordering the edges of $G^{(i)}_T(\mathcal{I}_i, \mathcal{O}_i)$ in the order   $({\rm c}_{g_{14}}, {\rm c}_{g_{14'}}, {\rm c}_{g_{12+24}}, {\rm c}_{g_{12+24'}}, {\rm c}_{g_{24'}}, {\rm c}_{g_{34}})$; we ordered the coordinates of $\R^{|E(G)|}$ by ordering the edges of $G$  in the order $({\rm c}_{f_{14}}, {\rm c}_{f_{14'}}, {\rm c}_{f_{12}}, {\rm c}_{f_{24}}, {\rm c}_{f_{24'}}, {\rm c}_{f_{34}})$. We have that $s({g_{14}})=\{f_{14}\}, s({g_{14'}})=\{f_{14'}\}, s({g_{12+24}})=\{f_{12}, f_{24}\},$ $s({g_{12+24'}})=\{f_{12}, f_{24'}\}, s({g_{24'}})=\{f_{24'}\}, s({g_{34}})=\{f_{34}\}$.}
\label{fig:phiT}
\end{figure}

   By abuse of notation instead of writing $\phi_T(\F_{G^{(i)}_T(\mathcal{I}_i, \mathcal{O}_i)}(\a))$ we write  $\F_{G^{(i)}_T(\mathcal{I}_i, \mathcal{O}_i)}(\a)$  from now on, including in Lemma \ref{lem:sub}. With this convention we have 
$\F_{G^{(i)}_T(\mathcal{I}_i, \mathcal{O}_i)}(\a)\subseteq \F_{G}(\a)$. 
\medskip

The following Subdivision Lemma generalizes \cite[Lemma 3.4]{MMlidskii}. The proof is analogous to that of \cite[Lemma 3.4]{MMlidskii}, and we leave it to the interested reader.

\begin{lemma}[Subdivision Lemma]  \label{lem:sub}   Let  $G$ be a graph on the vertex set $[n+1]$.
 Fix an integer netflow vector $\aa=(a_1,\ldots,a_n,-\sum_{i=1}^n a_i)$, $a_i \in \mathbb{Z}_{\geq 0}$ as well as a vertex $i\in \{2,\ldots,n\}$ and ordered multisets $\mathcal{I}_i, \mathcal{O}_i$, which are submultisets of the multiset of incoming and outgoing edges incident to $i$. Then, 
\begin{equation} \label{eq:?}
\F_G(\g)=\bigcup_{T \in \mathcal{T}_{\mathcal{I}_i \cup \{i\},\mathcal{O}_i}} \F_{G^{(i)}_T(\mathcal{I}_i, \mathcal{O}_i)}(\g).
\end{equation} 
Moreover,  $\{\F_{G^{(i)}_T(\mathcal{I}_i, \mathcal{O}_i)}(\g)\}_{T \in \mathcal{T}_{\mathcal{I}_i \cup \{i\}, \mathcal{O}_i}}$ are interior disjoint.  
\end{lemma}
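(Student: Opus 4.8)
The plan is to follow the blueprint of \cite[Lemma 3.4]{MMlidskii}, which the excerpt explicitly invokes, and check that nothing in the argument used the hypothesis that $\mathcal I_i$ and $\mathcal O_i$ are the \emph{full} sets of incoming and outgoing edges at $i$. The core of the proof is a flow-combinatorial bijection argument: I would first describe, for a fixed $\g$-flow $f$ on $G$, how to produce a tree $T \in \mathcal T_{\mathcal I_i \cup \{i\}, \mathcal O_i}$ and a $\g$-flow on $G^{(i)}_T(\mathcal I_i,\mathcal O_i)$ whose image under $\phi_T$ is $f$; then show this assignment, restricted to generic $f$ (those lying in the interior of a unique piece), is a well-defined bijection, which simultaneously gives the covering $\eqref{eq:?}$ and interior-disjointness.

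The key steps, in order. First, reduce to local data: only the flow values on $\mathcal I_i \cup \mathcal O_i$ and the flow-conservation equation at $i$ are affected; edges outside $\mathcal I_i \cup \mathcal O_i$ keep their flow values verbatim, and the conservation equations at vertices $\ne i$ are untouched since $edge(e_1,e_2)$ contributes to vertex $r$ and vertex $s$ exactly as $e_1$ and $e_2$ did. So the claim localizes to the following statement about a ``splitting'' at a single vertex $i$ with incoming multiset $\mathcal I_i$ carrying flow $(\alpha_e)_{e\in\mathcal I_i}$, outgoing multiset $\mathcal O_i$ carrying flow $(\beta_d)_{d\in\mathcal O_i}$, plus a fixed residual netflow $\delta$ at $i$ coming from the edges \emph{not} in $\mathcal I_i\cup\mathcal O_i$, subject to $\sum_e \alpha_e + (\text{in-flow from outside}) = \sum_d \beta_d + (\text{out-flow from outside})$. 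Second, recall the classical fact (the ``noncrossing tree'' normal form, essentially the reduction underlying the Lidskii formulas) that the polytope of nonnegative matrices with prescribed row sums $(\alpha_e, \delta)$ indexed by $\mathcal I_i \cup \{i\}$ and column sums $(\beta_d)$ indexed by $\mathcal O_i$ — i.e. a transportation polytope — is subdivided into its interior-disjoint simplices indexed by the spanning noncrossing trees of the bipartite graph, and that $\phi_T$ on the local coordinates is exactly the map sending the entries of such a matrix supported on $T$ to their row/column marginals. Third, verify that $\phi_T$ as defined in the excerpt agrees with this local marginal map on the $\mathcal I_i \cup \mathcal O_i$ coordinates and is the identity elsewhere, and that $G^{(i)}_T(\mathcal I_i,\mathcal O_i)$ is precisely the graph whose flow polytope realizes the fiber over a matrix supported on $T$ — here the appended element $i$ in the ordered set $\mathcal I_i \cup \{i\}$ accounts for $edge(i,(i,j)) = (i,j)$, i.e. the ``leftover'' outgoing flow from the outside-of-$\mathcal I_i$ part of the in-flow. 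Fourth, assemble: every $\g$-flow $f$ on $G$ restricts to such a transportation matrix's marginals at $i$, hence lies in some $\phi_T$-image, giving $\eqref{eq:?}$; and generically the supporting tree $T$ is unique, giving interior-disjointness, since the $\phi_T$-images have disjoint relative interiors exactly as the simplices of the transportation polytope do.

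The main obstacle I anticipate is bookkeeping rather than conceptual: making the localization rigorous requires being careful that ``flow on edges outside $\mathcal I_i \cup \mathcal O_i$'' can be \emph{anything} consistent with conservation at the other vertices, so the fibered decomposition $\F_G(\g) \to (\text{product over } i\text{'s local transportation polytope})$ is really a decomposition of each fiber, and that interior-disjointness of the global pieces follows from interior-disjointness in the single local factor being perturbed (the other factors are shared). One must also confirm the dimension count matches — $\#\mathcal T_{\mathcal I_i \cup \{i\},\mathcal O_i} = \binom{|\mathcal I_i| + |\mathcal O_i| - 1}{|\mathcal I_i|}$ pieces, each of the same dimension as $\F_G(\g)$ — and that $\phi_T$ restricted to $\F_{G^{(i)}_T(\mathcal I_i,\mathcal O_i)}(\g)$ lands inside $\F_G(\g)$ (already noted in the excerpt) with the union being all of it. Since the excerpt grants us everything through the statement and says the proof is ``analogous to that of \cite[Lemma 3.4]{MMlidskii}'', in practice I would write: the proof of \cite[Lemma 3.4]{MMlidskii} applies \emph{mutatis mutandis}, the only change being that the local transportation polytope at $i$ now has row/column sums indexed by the chosen submultisets $\mathcal I_i, \mathcal O_i$ (with the extra row $i$) rather than the full incoming/outgoing multisets, and the noncrossing-tree subdivision of a transportation polytope holds for arbitrary nonnegative marginals — so no new argument is needed and we refer the reader to \emph{loc. cit.}
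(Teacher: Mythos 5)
The paper leaves this proof to the reader, so there is no written argument to compare against; the question is whether your proposal is sound. It is not quite: there is a real gap in the \emph{mutatis mutandis} step, and in fact it exposes that the lemma as stated is false without an additional hypothesis.

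Your localization correctly identifies that lifting a flow $f\in\F_G(\g)$ to a flow on some $G^{(i)}_T(\mathcal I_i,\mathcal O_i)$ amounts to solving a transportation problem with rows indexed by $\mathcal I_i\cup\{i\}$ and columns by $\mathcal O_i$: the $\mathcal I_i$-row marginals are $(f_e)_{e\in\mathcal I_i}$, the column marginals are $(f_d)_{d\in\mathcal O_i}$, and the $i$-row marginal is forced to be $\sum_{d\in\mathcal O_i}f_d-\sum_{e\in\mathcal I_i}f_e$. You then invoke the noncrossing-tree dissection of a transportation polytope ``for arbitrary nonnegative marginals,'' and this is precisely where the argument breaks: nothing in the hypotheses guarantees that the forced $i$-row marginal is nonnegative. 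When $\mathcal O_i$ is the \emph{full} multiset of outgoing edges at $i$ (and $\mathcal I_i$ any submultiset of incoming edges), conservation at $i$ gives $\sum_{d\in\mathcal O_i}f_d-\sum_{e\in\mathcal I_i}f_e = a_i + (\text{inflow through edges not in }\mathcal I_i)\ge 0$, so the transportation polytope is nonempty and your argument closes. This is also the only case the paper ever uses (in Definition \ref{def:rg2}, the edges added to form $\Gc$ are all incoming, so $\mathcal O_i$ is always the full outgoing multiset of the current graph). But the lemma as stated permits $\mathcal O_i$ to be a proper submultiset of the outgoing edges, and then the $i$-row marginal can be negative, the transportation polytope is empty, and such flows $f$ are not covered by any $\phi_T$-image.

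Here is a concrete counterexample to the statement as written. Let $n=2$, $G$ the graph on $\{1,2,3\}$ with edges $e_1=(1,2)$ and two parallel edges $e_2=e_3=(2,3)$, and $\g=(1,0,-1)$. Then $\F_G(\g)$ is the segment $\{(1,t,1-t): t\in[0,1]\}$. Take $i=2$, $\mathcal I_2=(e_1)$ (the full incoming multiset), and $\mathcal O_2=(e_2)$ (a \emph{proper} submultiset of $\{e_2,e_3\}$). Then $\#\mathcal T_{\mathcal I_2\cup\{2\},\mathcal O_2}=\binom{1}{1}=1$; the unique tree $T$ connects both $e_1$ and $2$ to $e_2$, and $G^{(2)}_T(\mathcal I_2,\mathcal O_2)$ has edge multiset $\{e_3,(1,3),e_2\}$. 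Its flow polytope is the single point with $g_{(1,3)}=1$, $g_{e_2}=g_{e_3}=0$ (conservation at vertex $2$ now reads $g_{e_2}+g_{e_3}=0$). Applying $\phi_T$ gives the single point $(1,1,0)\in\F_G(\g)$, so $\bigcup_T\phi_T(\F_{G^{(2)}_T}(\g))$ is a point, not the whole segment. In terms of your transportation picture, the $i$-row marginal is $f_{e_2}-f_{e_1}=t-1$, which is negative for $t<1$, and those flows are exactly the ones missed.

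So the correct version of the Subdivision Lemma needs the hypothesis that $\mathcal O_i$ equals the full multiset of outgoing edges at $i$ (with $\mathcal I_i$ allowed to be any submultiset of the incoming edges); under that hypothesis your transportation-polytope argument is complete, and this is all that is needed for the reduction trees $R^{\bf c}_G$ in the rest of the paper. The takeaway is that the asymmetry between $\mathcal I_i$ and $\mathcal O_i$ (the extra row $i$ on the $\mathcal I_i$ side) is not cosmetic: it is what absorbs the nonnegative surplus $a_i$, and that surplus lands on the correct side only when $\mathcal O_i$ is full.
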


We refer to replacing $G$ by $\{G^{(i)}_T(\mathcal{I}_i, \mathcal{O}_i)\}_{T \in \mathcal{T}_{\mathcal{I}_i\cup \{i\}, \mathcal{O}_i}}$
as in Lemma \ref{lem:sub}  as a \textbf{reduction}. We can encode
a series of     reductions on a flow polytope $\F_{G}(\a)$ in a rooted
tree called a \textbf{reduction tree} with root $G$; see Figure \ref{fig:complete} for an example. The root of this tree is the original graph
$G$.  After doing reductions on vertex $i$ with fixed $\mathcal{I}_i, \mathcal{O}_i$  ordered submultisets of the multiset of incoming and outgoing edges incident to $i$, the descendant nodes of the root are the graphs
${G^{(i)}_T(\mathcal{I}_i, \mathcal{O}_i)}$, for $T \in \mathcal{T}_{\mathcal{I}_i \cup \{i\},\mathcal{O}_i}$. For each new node we decide whether to stop or repeat this
process to define its descendants. The leaves of the reduction tree are those with no children.   Note that the flow polytopes
$\F_H(\a)$ of the graphs $H$ at the leaves of the     reduction tree   are interior disjoint and their union is $\F_{G}(\a)$   by  repeated application of  Lemma \ref{lem:sub}. 

 \medskip
 
 In \cite{MMlidskii} the authors used their less general version of Lemma \ref{lem:sub} to define the {\bf canonical subdivision} of flow polytopes $\mathcal F_{G}({\bf a})$.  This allowed them in particular to derive \eqref{eq:vol} purely geometrically. We include their construction here and will use it in the next section.

\begin{definition} \label{def:rg1} The {\bf canonical reduction tree} $R_G$ for a graph $G$ on the vertex set $[n+1]$ is obtained by   repeated use of Lemma \ref{lem:sub}   on the vertices $n, n-1, \ldots, 2$ in this order and on the sets of edges $\mathcal{I}_i=\{\{(j,i) \in E(G) \mid j<i\}\}$ and  $\mathcal{O}_i=\{\{(i,j) \in E(G) \mid i<j\}\}$, $i \in \{n, n-1, \ldots, 2\},$ where both $\mathcal{I}_i$ and $\mathcal{O}_i$ are ordered by decreasing edge lengths. \end{definition}

	For an example of a canonical reduction tree see Figure~\ref{fig:canonical}. Note that at each vertex $i$ the set $\mathcal{I}_i$ is all of the coming edges at $i$ (unlike in Figure \ref{fig:complete}) and the set $\mathcal{O}_i$ is the set of all outgoing edges at $i$. 

\begin{figure} 
\begin{center}\includegraphics[scale=.70]{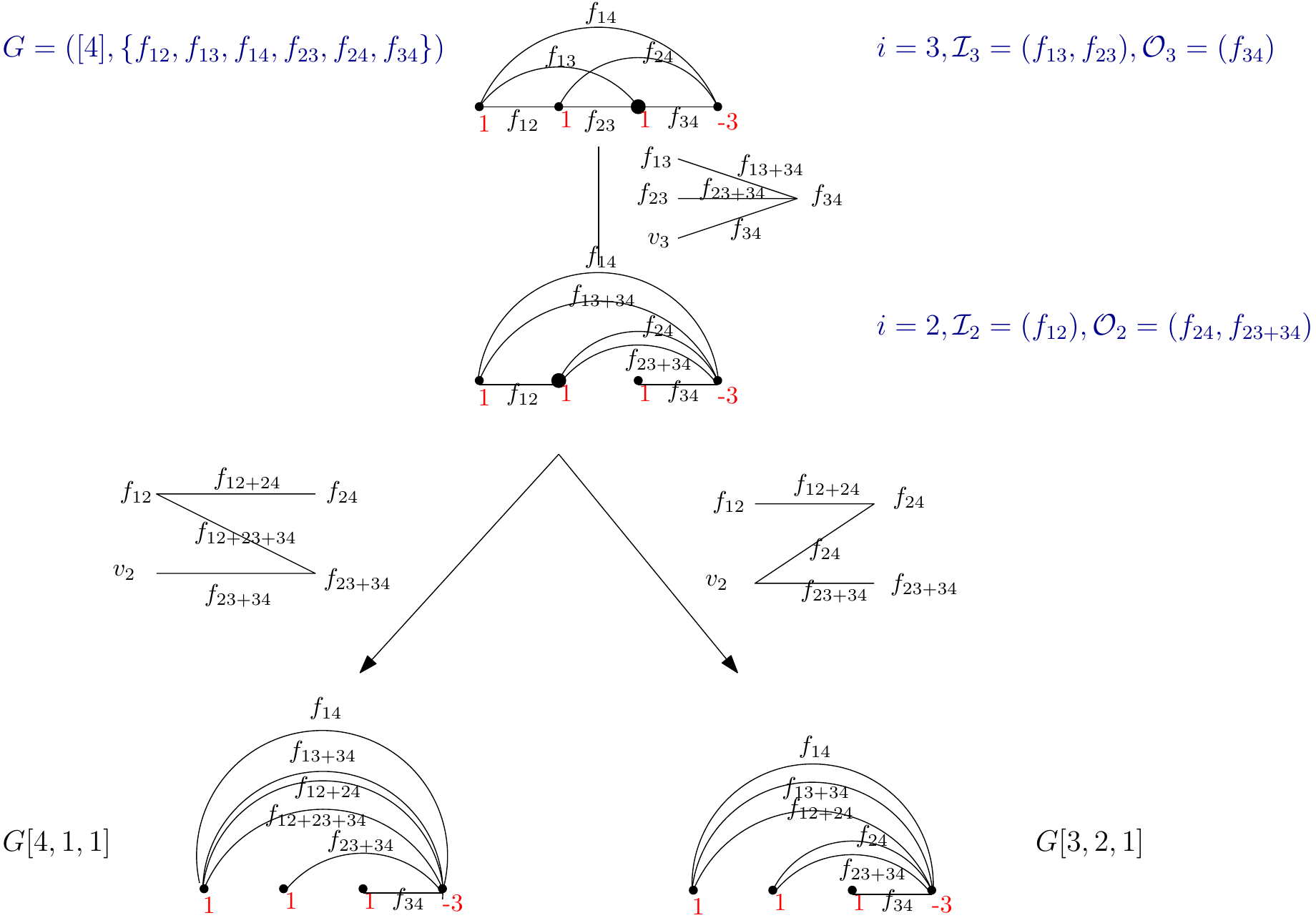}\end{center}
\caption{The canonical reduction tree for complete graph $K_4$ with nonnegative netflows on the first 3 vertices.}
\label{fig:canonical}
\end{figure}

\begin{definition} \label{def:gm} 	   Given a tuple $\m=(m_1,\ldots,m_n)$ of positive
integers, let $G[\m]$ be the graph with vertices $[n+1]$ and $m_i$
edges $(i,n+1)$. 
\end{definition}
	
	Note that the leaves of the canonical reduction tree in Figure \ref{fig:canonical} are both of the form $G[\m]$ for some $\m$; in particular the leaf on the left is $G[4,1,1]$ and the leaf on the right if $G[3,2,1]$. The following theorem states that this is no coincidence. 
	
	Recall that given graph $G$ on the vertex set $[n+1]$ we let  $\lout_G(i)=\outdeg_G(i)-1$,  where $\outdeg_G(i)$   is 
the outdegree   of  vertex $i$ in  $G$. We denote  $\lout_G=( \lout_G(1), \ldots,  \lout_G(n))$.
	 
\begin{theorem} \label{thm:a} \cite[Section 4]{MMlidskii}  
The  canonical reduction tree $R_G$ of  $G$ on the vertex set $[n+1]$ with $m$ edges has  $$\sum_{\substack{{\bf j} \geq \lout_G \\ j_1 + \dots + j_n = m-n}} K_G(j_1 - \lout_G(1), \dots, j_n - \lout_G(n), 0)$$  leaves, where:
\begin{itemize}

\item the sum is over  weak
compositions ${\bf j}=(j_1,j_2,\ldots,j_n)$ of $
m-n$ that are $\geq (\om(1),\ldots,\om(n))$ in dominance order, that is, $\sum_{k=1}^l j_k \geq \sum_{k=1}^l \lout_G(k)$ for all $l \in [n]$; 
\item  $K_G(j_1 -  \lout_G(1), \dots, j_n -  \lout_G(n), 0)$ of the leaves of $R_G$  are   ${G[{\bf j+1}]}$;
\item  the polytopes $\mathcal F_{G[{\bf j+1}]}({\bf a})$     are interior disjoint and their union is $\mathcal F_{G}({\bf a})$;

\item  if $a_i>0$, $i \in [n]$, then  the polytopes $\mathcal F_{G[{\bf j+1}]}({\bf a})$    are of the same dimension as $\mathcal F_{G}({\bf a})$.
\end{itemize}
\end{theorem}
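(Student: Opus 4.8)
\textbf{Proof plan for Theorem \ref{thm:a}.}

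The plan is to induct on the number of reduction steps, i.e.\ on the vertices $n, n-1, \ldots, 2$ in the order dictated by Definition \ref{def:rg1}, tracking not just the leaf graphs but enough numerical data about them to make the induction go through. The right invariant to track after reducing at vertices $n, n-1, \ldots, k+1$ is: the intermediate graphs appearing at that stage of $R_G$ are exactly the graphs $H$ on $[n+1]$ whose edges among $\{1,\ldots,k,k+1\}$ (with all larger vertices having only edges into $n+1$, in a way I will make precise) are governed by a Kostant partition function count, and the number of copies of each such $H$ is the corresponding value $K_G(\,\cdot\,)$. Concretely, I would first treat the base case of a single reduction at vertex $n$: here $\mathcal I_n$ is all incoming edges and $\mathcal O_n$ is all $\outdeg_G(n)$ outgoing edges, all of which are $(n,n+1)$; by the count $\#\mathcal T_{L,R}=\binom{\ell+r-2}{\ell-1}$ stated in the excerpt, the reduction produces $\binom{\indeg_G(n)+\outdeg_G(n)-1}{\indeg_G(n)}$ children, and one checks directly that this equals $\sum K_G(\ldots)$ summed over the allowed compositions supported at coordinate $n$. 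The edges $edge(e_1,e_2)$ created all point into $n+1$, so each child is again of the shape needed to continue the induction.

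For the inductive step, suppose we have reduced at $n,\ldots,i+1$ and the current leaves (of the partial reduction tree) are classified by a Kostant partition function sum as in the statement, restricted to coordinates $i+1,\ldots,n$; now reduce at vertex $i$. Here the key structural observation is that in any graph $H$ reached so far, vertex $i$ has some set $\mathcal I_i$ of incoming edges (these came from $G$, possibly merged with edges created at later vertices — but note that vertices $>i$ were already reduced, so actually the incoming edges at $i$ are still genuine edges of $G$) and all its outgoing edges point to $n+1$ and number $\outdeg_H(i)$. Reducing at $i$ replaces $H$ by $\binom{\indeg_H(i)+\outdeg_H(i)-1}{\indeg_H(i)}$ bipartite-noncrossing-tree children; each child redistributes the outdegree-at-$i$ flow onto the sources of the incoming edges, producing new edges into $n+1$ from those sources, and leaving the final result with vertex $i$ carrying a single edge $(i,n+1)$ of multiplicity determined by the tree. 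The arithmetic identity one needs is the recursion for the Kostant partition function: summing $\binom{\indeg_H(i)+\outdeg_H(i)-1}{\indeg_H(i)}$ against the inductive count reproduces exactly $K_G$ evaluated with one more coordinate "resolved", which is precisely \eqref{eq:kost}-style bookkeeping but at the level of graph multiplicities rather than integer points. Iterating down to vertex $2$ yields leaves with every vertex in $\{2,\ldots,n\}$ carrying a single weighted edge to $n+1$, i.e.\ graphs of the form $G[\m]$; reading off $\m = {\bf j}+{\bf 1}$ gives the second and first bullet points.

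The third bullet — that the $\mathcal F_{G[{\bf j}+{\bf 1}]}({\bf a})$ are interior disjoint with union $\mathcal F_G({\bf a})$ — is immediate from repeated application of the Subdivision Lemma (Lemma \ref{lem:sub}) along $R_G$, exactly as noted in the paragraph following that lemma; no new work is needed there. For the last bullet, when $a_i>0$ for all $i\in[n]$, I would argue that each $G[{\bf j}+{\bf 1}]$ still has an ${\bf a}$-flow with all coordinates strictly positive (so its flow polytope is full-dimensional inside the common affine span): since ${\bf j}\geq \lout_G$ in dominance order, each leaf graph retains enough edges out of each vertex to carry positive flow everywhere, and one can exhibit such a flow by a greedy/left-to-right construction using $a_i>0$. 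Alternatively, dimension is preserved under the integral equivalences $\phi_T$ and each reduction step sends a full-dimensional polytope to a union of full-dimensional pieces (the noncrossing trees are spanning trees of the relevant bipartite graph, so no dimension is lost), which gives the claim by induction.

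The step I expect to be the main obstacle is the bookkeeping in the inductive step: making precise the claim that the multiplicities of intermediate graphs in $R_G$ are governed by partial Kostant partition function evaluations, and checking that one reduction at vertex $i$ advances this count correctly. This amounts to recognizing the Kostant partition function recursion inside the tree-counting $\binom{\indeg+\outdeg-1}{\indeg}=\multiset{\outdeg}{\indeg-1}$ weighted appropriately — essentially re-deriving \eqref{eq:kost} combinatorially — and carefully matching which edges of a leaf $G[\m]$ correspond to which composition ${\bf j}$, including verifying the dominance-order condition ${\bf j}\geq \lout_G$ emerges naturally (a vertex can never "lose" more outgoing edges than it started with, since outdegree only accumulates as we push flow down toward $n+1$). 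Once that correspondence is set up cleanly the rest is routine.
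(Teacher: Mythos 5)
The paper does not actually prove Theorem \ref{thm:a}; it cites it from \cite[Section 4]{MMlidskii} and uses it as an input. So there is no in-document proof to compare against, and I can only evaluate your plan on its own terms and against the definitions the paper gives.

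The overall shape of your plan is the natural one and almost certainly matches what the cited source does: induct along the reduction order $n, n-1, \ldots, 2$, tracking the multiset of graphs appearing at each intermediate level of $R_G$, and match the resulting recursion against a recursion for $K_G$. Your structural observations are correct: after reducing at $n,\ldots,i+1$, the incoming edges at $i$ are exactly those of $G$ (reductions at $j>i$ only push flow rightward, so they never create edges into a vertex $\le i$), while all outgoing edges at $i$ now point to $n+1$; the number of children at this step is therefore $\binom{\indeg_G(i)+\outdeg_H(i)-1}{\indeg_G(i)}$, and outdegree at every vertex is monotone nondecreasing along the reduction, which is the right mechanism for the dominance condition $\mathbf j \ge \lout_G$. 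Your handling of the third bullet (immediate from the Subdivision Lemma) is fine, and for the fourth bullet the cleanest version of your first argument suffices: $\mathcal F_{G[\mathbf j+\mathbf 1]}(\mathbf a)$ is a product of simplices $\prod_i a_i\Delta^{j_i}$, so when all $a_i>0$ it has dimension $\sum_i j_i = m-n = \dim\mathcal F_G(\mathbf a)$.

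The genuine gap — which you honestly flag — is the heart of the matter: you never state the inductive invariant precisely, and without it the phrase ``governed by a Kostant partition function count'' is just a placeholder. What one actually needs is an explicit classification of the graphs $H$ at depth $n-i$ of $R_G$ (they are determined by the tuple of multiplicities $(d_1,\ldots,d_n)$ of the edge $(j,n+1)$, with the induced subgraph on $[i]$ still equal to $G|_{[i]}$), together with a statement of the form: the number of occurrences of a given $H$ at depth $n-i$ equals the number of nonnegative integer flows on the subgraph of $G$ induced on $\{i+1,\ldots,n\}$ with a specified netflow determined by $(d_1,\ldots,d_n)$ and the indegrees/outdegrees of $G$. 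One then checks that a reduction at $i$ advances this flow-counting by one vertex, which is exactly where the multiset/binomial identity $\#\mathcal T_{L,R}=\binom{\ell+r-2}{\ell-1}$ must be matched against the flow recursion. Your plan says ``the arithmetic identity one needs is the recursion for the Kostant partition function'' but does not formulate that recursion or show it is the one produced by the tree-counting; in particular you do not explain how to stratify the $\binom{\indeg+\outdeg-1}{\indeg}$ children by their new outdegree profile $(\deg_T(i); \deg_T((r,i)))_r$, which is what feeds the next stage. Two smaller inaccuracies: (i) a leaf does not have ``a single edge $(i,n+1)$'' but rather $\deg_T(i)\ge 1$ parallel copies of it, so you should say ``edges $(i,n+1)$ with multiplicity $\deg_T(i)$''; (ii) your alternative dimension argument (``each reduction step sends a full-dimensional polytope to a union of full-dimensional pieces'') is false in general — the Subdivision Lemma produces lower-dimensional pieces when the netflow at the reduced vertex is zero (this is exactly the content of Lemma \ref{lem:??}) — so it needs the hypothesis $a_i>0$ at each step, and you would still have to verify that the pieces discarded as lower-dimensional in Lemma \ref{lem:??} do not occur here. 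Stick with the product-of-simplices computation instead.
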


The polytopes specified in Theorem \ref{thm:a} are the top dimensional polytopes in the {\bf canonical subdivision} of $\mathcal F_{G}({\bf a})$ \cite{MMlidskii}.  

\begin{example} \label{ex:1} Let us apply Theorem \ref{thm:a} to the canonical reduction tree $R_{K_4}$ from Figure~\ref{fig:canonical}. Here $n=3$ and $m=6$. 
We have that $(\outdeg_{K_4}(1), \outdeg_{K_4}(2), \outdeg_{K_4}(3))=(3,2,1)$ and thus, $(\lout_{K_4}(1), \lout_{K_4}(2), \lout_{K_4}(3))=(2,1,0)$. The sum in Theorem  \ref{thm:a}  is over  compositions ${\bf j}=(j_1,j_2,j_3)$ of $
3$ that are $\geq (2,1,0)$ in dominance order. Thus, the two possible ${\bf j}$'s are $(3,0,0)$ and $(2,1,0)$. The first two points of  Theorem  \ref{thm:a} thus state that there are $K_{K_4}(3 - 2, 0-1, 0-0, 0)=1$   leaves of $R_{K_4}$ that  are   ${G[4,1,1]}$ and  $K_{K_4}(2 - 2, 1-1, 0-0, 0)=1$   leaves of $R_{K_4}$ that  are   ${G[3,2,1]}$; see Figure~\ref{fig:canonical}. 

\end{example}

\section{A few geometric insights} 
\label{sec:?}
This section collects the main insights necessary for proving \eqref{eq:kost} purely geometrically. The proof of  \eqref{eq:kost} relies on stringing all the following statements together in order to give a proof of it in Theorem \ref{thm:main-sec-4}.  As mentioned in the previous section: we  reinterpret the left hand side of  \eqref{eq:kost} as a volume of a flow polytope (namely, of $\mathcal F_{\Gc}(e_1 - e_{n+2})$, see Definition \ref{def:c} for the meaning of $\Gc$), and then the right hand side can be obtained by summing volumes of polytopes in  a subdivision of our new flow polytope.  

\begin{definition} \label{def:c}
Fix a vector ${\bf c} = (c_1, \dots, c_n)\in \Z^n_{>0}$  and a graph $G$ on the vertex set $[n+1]$. The graph $\Gc$ is defined to be the graph obtained by adding a source vertex $0$ to $V(G)$, so that $V(\Gc) = [0,n+1]$, along with $c_i$ edges edges $(0, i)$, for every $i \in [n]$, to $E(G)$. Formally, we have
$$G({\bf c}) := (V(G)\cup \{0\}, E(G) \cup \{\{(0, i)^{c_i} \mid  i \in [n]\}\}),$$
where $(0, i)^{c_i}$ signifies $c_i$ copies of the edge $(0, i)$. Note  that the graph $\Gc$ restricted to the vertex set $[n+1]$ is equal to the graph $G$.\end{definition}

Of importance in Definition \ref{def:c} is that when we restrict $\Gc$ to the vertex set $[n+1],$ we get $G$ back. 
This allows us to use our full knowledge of canonical reduction trees, as given in Theorem \ref{thm:a}.  This is because Lemma \ref{lem:sub} allows for not using all the incoming (or outgoing) edges at a vertex -- and yet if we use all incoming and outgoing edges that are in a given subgraph of the graph,  we can still invoke Theorem \ref{thm:a} for the mentioned subgraph! Details are spelled out in Theorem \ref{thm:dis} below.
\medskip 

Recall that given graph $G$ on the vertex set $[n+1]$ we let    $\lin_G(i)=\indeg_G(i)-1$, where  $\indeg_G(i)$ is
the   indegree of  vertex $i$ in  $G$.

With Definition \ref{def:c} we have that:
\begin{lemma}
\label{lem:in-vector}
Fix a vector ${\bf c} = (c_1, \dots, c_n)\in \Z^n_{>0}$ and a graph $G$ on the vertex set $[n+1]$.  Define $a_i\colonequals \inn_G(i) + c_i$. The number of   integer points in  $\mathcal F_G(a_1, \dots, a_n, -\sum_{i=1}^na_i)$ is equal to the number of integer points in the flow polytope $\mathcal F_{G({\bf c})}(0, \inn_{\Gc}(1), \dots, \inn_{\Gc}(n), -\sum_{i=1}^n \inn_{\Gc}(i))$. In other words,
$$K_G\B(a_1, \dots, a_n, -\sum_{i=1}^na_i\B) = K_{\Gc}\B(0, \inn_{\Gc}(1), \dots, \inn_{\Gc}(n), -\sum_{i=1}^n \inn_{\Gc}(i)\B).$$
\end{lemma}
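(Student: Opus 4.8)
The plan is to exhibit an explicit bijection between the lattice points of the two flow polytopes, i.e.\ between $\mathbf a$-flows on $G$ (where $a_i = \inn_G(i) + c_i$) and flows on $\Gc$ with the stated netflow vector. Given an $\mathbf a$-flow $f$ on $G$, I would construct a flow $g$ on $\Gc$ as follows: keep $g_e = f_e$ on every edge $e \in E(G)$, and on the $c_i$ copies of the new edge $(0,i)$ put flows whose total is $c_i$. The key observation is a \emph{conservation count}: at vertex $i \in [n]$ of $\Gc$, the incoming flow is the sum of $f$ along the original incoming edges of $G$ plus $c_i$ (from the new edges), which equals the outgoing flow of $f$ at $i$ in $G$ plus $c_i$; since the netflow at $i$ in $G$ is $a_i = \inn_G(i)+c_i$, one checks that the netflow at $i$ in $\Gc$ must be $\inn_{\Gc}(i) = \inn_G(i)$ (each copy of $(0,i)$ contributes, and $\inn_{\Gc}(i) = \indeg_G(i) - 1 + c_i$, so I need to be careful about exactly which indexing convention makes the arithmetic land on $\inn_{\Gc}(i)$). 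So the first step is to nail the bookkeeping: write out $\indeg_{\Gc}(i) = \indeg_G(i) + c_i$, hence $\inn_{\Gc}(i) = \inn_G(i) + c_i$, and verify that setting the netflow at $0$ to $0$ forces netflow $-\sum \inn_{\Gc}(i)$ at vertex $n+1$, consistent with flow conservation.

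The substance of the bijection is the choice on the $c_i$ parallel edges $(0,i)$. Since these $c_i$ edges are parallel (same head, same tail), the only constraint a flow on $\Gc$ places on them is that their sum is prescribed by conservation at vertex $i$. I would argue that, given the values of $g$ on $E(G)$, conservation at $i$ determines the sum $\sum_{k=1}^{c_i} g_{(0,i)^k}$ uniquely, and then the number of ways to distribute that sum among $c_i$ nonnegative integer parts is $\multiset{c_i}{\text{(that sum)}}$ — but for the statement as written we only need that there is \emph{at least one} distribution, and more precisely that the count of lattice points matches. The cleanest route: show the sum forced on the parallel edges at $i$ is exactly some fixed value $s_i$ depending only on $f$, then observe that lattice points of $\mathcal F_{\Gc}(\cdots)$ fiber over lattice points of $\mathcal F_G(\mathbf a)$ — wait, that would overcount. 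So instead I would restrict to a canonical section: declare the flow on $c_i - 1$ of the copies of $(0,i)$ to be $0$ and put all of $s_i$ on the last copy. This is manifestly a nonnegative integer flow precisely when $s_i \ge 0$, which holds because $s_i$ is the flow surplus that $f$ needed to receive at $i$, namely $s_i = c_i$ minus any deficit — actually I should double-check $s_i = c_i$ is forced, in which case nonnegativity is automatic and the section is well-defined.

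Then I would verify the map $f \mapsto g$ is a bijection by writing the inverse: given a lattice point $g$ of $\mathcal F_{\Gc}(0, \inn_{\Gc}(1),\dots)$, set $f_e = g_e$ for $e \in E(G)$ and check that $f$ satisfies $M_G f = \mathbf a$ — this is again the same conservation computation run backwards, using that the total flow entering $i$ from vertex $0$ in \emph{any} such $g$ equals $c_i$ (forced by netflow conservation at $0$ together with the netflow values at each $i$). Since both maps are explicit and evidently inverse to each other on lattice points, the lattice-point counts agree, which is exactly $K_G(a_1,\dots,a_n,-\sum a_i) = K_{\Gc}(0,\inn_{\Gc}(1),\dots,\inn_{\Gc}(n),-\sum\inn_{\Gc}(i))$.

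\textbf{Main obstacle.} The only real danger here is the index arithmetic around $\inn_G(i) = \indeg_G(i) - 1$ versus the genuine indegree, and making sure that after adding $c_i$ edges the forced flow on the parallel bundle is $c_i$ (and not $c_i$ shifted by $\pm 1$, which would break nonnegativity of the canonical section or the netflow match). I expect the resolution is simply that netflow at vertex $i$ in $\Gc$ is $\inn_{\Gc}(i) = \indeg_G(i) + c_i - 1$, and the netflow at $i$ in $G$ is $a_i = \indeg_G(i) - 1 + c_i$, so that the \emph{difference} $a_i - \inn_{\Gc}(i) = 0$ is absorbed exactly by the $c_i$ new edges carrying total flow $c_i$; once that identity is checked the rest is immediate. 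No subdivision machinery (Lemma~\ref{lem:sub}, Theorem~\ref{thm:a}) is needed for this particular lemma — it is a pure flow-conservation bijection — so I would not invoke those here.
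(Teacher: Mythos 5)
There is a genuine gap, and it is exactly the index arithmetic you flagged as the ``main obstacle'': you conclude the $c_i$ new parallel edges $(0,i)$ are forced to carry total flow $c_i$, but they are forced to carry total flow $0$. Vertex $0$ has netflow $0$ in $\mathcal F_{\Gc}(0,\inn_{\Gc}(1),\dots,\inn_{\Gc}(n),-\sum\inn_{\Gc}(i))$ and has only outgoing edges; since all coordinates of a flow are nonnegative, conservation at vertex $0$ forces the flow on \emph{every} edge $(0,i)$ to be exactly $0$. This one observation is what makes the lemma true, and it dissolves every worry in your writeup. There is no fibering and no need for a canonical section: a lattice point $g$ of $\mathcal F_{\Gc}(\cdots)$ vanishes on all new edges, so restricting to $E(G)$ loses no information, and the restricted flow has netflow $\inn_{\Gc}(i)=\inn_G(i)+c_i=a_i$ at each $i\in[n]$. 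Conversely an integer ${\bf a}$-flow on $G$ extends uniquely to $\Gc$ by putting $0$ on the new edges. This restriction/extension pair is the bijection, which is the paper's proof.

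Your proposed workaround would not repair the proof even under your mistaken value of $s_i$. If the new edges at $i$ really carried total flow $c_i>0$, the restriction map $g\mapsto g|_{E(G)}$ would be many-to-one with fiber size $\multiset{c_i}{c_i}$ at each vertex, and choosing one representative per fiber proves an inequality $K_G\le K_{\Gc}$, not the equality. Moreover, your proposed inverse asserts ``the total flow entering $i$ from vertex $0$ in any such $g$ equals $c_i$,'' which contradicts conservation at vertex $0$: if those sums were $c_i>0$ the net outflow at $0$ would be $\sum_i c_i>0$, not $0$. Also note the restricted flow $g|_{E(G)}$ would then have netflow $\inn_{\Gc}(i)+c_i=a_i+c_i\ne a_i$ at vertex $i$, so it would not land in $\mathcal F_G({\bf a})$. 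Once you replace $s_i=c_i$ by $s_i=0$, the rest of your plan is the correct (and the paper's) argument.
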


\begin{proof}
Consider an integer $(0,  \inn_{\Gc}(1), \dots,  \inn_{\Gc}(n), -\sum_{i=1}^n  \inn_{\Gc}(i))$-flow $f$  on the graph $G({\bf c})$.  Note that when we restrict $f$ to $G\subset \Gc$  it is  an $( \inn_{\Gc}(1), \dots,  \inn_{\Gc}(n), -\sum_{i=1}^n  \inn_{\Gc}(i))$-flow on $G$. By the definition of $\Gc$, we have $ \inn_{\Gc}(i) = \inn_{G}(i) + c_i = a_i$. Thus, an integer $(0,  \inn_{\Gc}(1), \dots,  \inn_{\Gc}(n), -\sum_{i=1}^n  \inn_{\Gc}(i))$-flow on ${G({\bf c})}$   restricts to an integer $(a_1, \dots, a_n,$ $-\sum_{i=1}^na_i)$-flow on $G$. This is clearly a bijection showing that the number of integer $(0,  \inn_{\Gc}(1), \dots,$ $ \inn_{\Gc}(n), -\sum_{i=1}^n  \inn_{\Gc}(i))$-flows on ${G({\bf c})}$   equals to the number of  integer $(a_1, \dots, a_n, -\sum_{i=1}^na_i)$-flows on $G$. The latter equal $K_{\Gc}(0,  \inn_{\Gc}(1), \dots,  \inn_{\Gc}(n), -\sum_{i=1}^n  \inn_{\Gc}(i))$ and $K_G(a_1, \dots, a_n, -\sum_{i=1}^na_i)$, respectively.  
\end{proof}

\subsection{Dissecting  $\mathcal F_{\Gc}(e_1 - e_{n+2})$.} In this section we show how to dissect $\mathcal F_{\Gc}(e_1 - e_{n+2})$, ${\bf c}\in \Z^{n}_{>0}$,    into $$\displaystyle \sum_{\substack{{\bf j} \geq \lout_G \\ j_1 + \dots + j_n = m-n}} \frac{(c_1)_{j_1}}{j_1!}\dots\frac{(c_n)_{j_n}}{j_n!} K_G(j_1 -  \lout_{G}(1), \dots, j_n -  \lout_{G}(n), 0)$$ many unimodular simplices. The notation $(k)_j$ stands for $(k)_j:=k(k-1)\cdots (k-j+1)$.

	\medskip

\begin{definition} \label{def:rg2} Given a graph $G$ on the vertex set $[n+1]$   define the reduction tree $R_G^{\bf c}$ with root  $\Gc$ as the reduction tree obtained by   repeated use of Lemma \ref{lem:sub}   on the vertices $n, n-1, \ldots, 2$ in this order and on the sets of edges $\mathcal{I}_i=\{\{(j,i) \in E(G) \mid j<i\}\}$ and  $\mathcal{O}_i=\{\{(i,j) \in E(G) \mid i<j\}\}$, $i \in \{n, n-1, \ldots, 2\},$ where both $\mathcal{I}_i$ and $\mathcal{O}_i$ are ordered by decreasing edge lengths. 
\end{definition}

We note that Definition \ref{def:rg2} is set up so that if we delete all edges incident to $0$ in the graphs labeling the nodes of $R_G^{\bf c}$ we obtain the canonical reduction tree $R_G$ of $G$ as  in Definition \ref{def:rg1}. For an example of the reduction tree $R_{K_4}^{(3,2,2)}$ see Figure \ref{fig:complete}; compare this with the canonical reduction tree $R_{K_4}$ on Figure \ref{fig:canonical}.

\begin{theorem} \label{thm:dis} Fix  ${\bf c} \in \Z^n_{>0}$. Given a graph $G$ on the vertex set $[n+1]$ with $m$ edges, the  reduction tree $R_G^{\bf c}$ of  $\Gc$ has  $$\sum_{\substack{{\bf j} \geq \lout_G \\ j_1 + \dots + j_n = m-n}} K_G(j_1 -  \lout_{G}(1), \dots, j_n -  \lout_{G}(n), 0)$$  leaves, where: 
\begin{itemize}
\item  the sum is over  weak
compositions ${\bf j}=(j_1,j_2,\ldots,j_n)$ of $
m-n$ that are $\geq (\om_1,\ldots,\om_n)$ in dominance order;

\item    $K_G(j_1 -  \lout_{G}(1), \dots, j_n -  \lout_{G}(n), 0)$ of the leaves of $R_G^{\bf c}$ are   ${G[{\bf j +1}]({\bf c})}$;
\item   the polytopes $\mathcal F_{G[{\bf j +1}]({\bf c})}({e_1-e_{n+2}})$   are  interior disjoint and their union is $\mathcal F_{\Gc}(e_1-e_{n+2})$;

\item   the polytopes $\mathcal F_{G[{\bf j +1}]({\bf c})}({e_1-e_{n+2}})$   are of the same dimension as $\mathcal F_{\Gc}(e_1-e_{n+2})$.
\end{itemize}
\end{theorem}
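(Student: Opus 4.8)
The plan is to deduce Theorem~\ref{thm:dis} from Theorem~\ref{thm:a} by exploiting the observation, made right after Definition~\ref{def:rg2}, that deleting all edges incident to the source vertex $0$ from every node of $R_G^{\bf c}$ recovers the canonical reduction tree $R_G$ of $G$. The key point is that in Definition~\ref{def:rg2} the reductions are performed only at vertices $i \in \{2,\ldots,n\}$ and only on the edge sets $\mathcal{I}_i, \mathcal{O}_i$ coming from $E(G)$ (never touching the edges $(0,i)$), so each reduction step of $R_G^{\bf c}$ is, after forgetting the source, exactly the corresponding reduction step of $R_G$. First I would make this correspondence precise: I claim there is a bijection between the nodes of $R_G^{\bf c}$ and the nodes of $R_G$, matching a graph $H$ in $R_G^{\bf c}$ with the graph $H|_{[n+1]}$ obtained by deleting the edges incident to $0$; under this bijection, if a node of $R_G^{\bf c}$ is the graph $\widehat H$, then $\widehat H = H({\bf c})$ where $H = \widehat H|_{[n+1]}$ is the matching node of $R_G$. (One checks inductively that $(G^{(i)}_T(\mathcal{I}_i,\mathcal{O}_i))({\bf c}) = (G({\bf c}))^{(i)}_T(\mathcal{I}_i,\mathcal{O}_i)$ when $\mathcal{I}_i, \mathcal{O}_i \subseteq E(G)$, since the bipartite noncrossing tree $T$ and the edge-summing operation are identical in both settings and the source edges are inert.) In particular, the leaves of $R_G^{\bf c}$ are exactly the graphs $L({\bf c})$ for $L$ a leaf of $R_G$.

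\smallskip

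Granting this bijection, the first two bullet points follow immediately from the corresponding bullet points of Theorem~\ref{thm:a}: the leaves of $R_G$ are counted by $\sum_{{\bf j} \geq \lout_G,\, |{\bf j}| = m-n} K_G(j_1 - \lout_G(1), \ldots, j_n - \lout_G(n), 0)$, with $K_G(j_1-\lout_G(1),\ldots,j_n-\lout_G(n),0)$ of them equal to $G[{\bf j+1}]$, hence the leaves of $R_G^{\bf c}$ are counted by the same sum, with $K_G(j_1-\lout_G(1),\ldots,j_n-\lout_G(n),0)$ of them equal to $G[{\bf j+1}]({\bf c})$. For the third bullet point, I would apply the Subdivision Lemma (Lemma~\ref{lem:sub}) repeatedly along $R_G^{\bf c}$ exactly as one applies it along any reduction tree: by the remark following Lemma~\ref{lem:sub}, the flow polytopes $\mathcal F_{\widehat H}(e_1 - e_{n+2})$ over the leaves $\widehat H$ of $R_G^{\bf c}$ are interior disjoint and their union is $\mathcal F_{\Gc}(e_1 - e_{n+2})$; here one just needs to check that each reduction in $R_G^{\bf c}$ is indeed a legal instance of Lemma~\ref{lem:sub}, i.e.\ that the chosen vertex $i$ lies in $\{2,\ldots,n\}$ — which holds because $i \le n$ by construction and $i \ge 2$, and the netflow vector $e_1 - e_{n+2}$ on $\Gc$ has the required form (with nonnegative entries at vertices $1,\ldots,n+1$, the last being $0$; note $0$-entries are permitted in Lemma~\ref{lem:sub}).

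\smallskip

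For the last bullet point — equidimensionality — I would argue that $\mathcal F_{\Gc}(e_1 - e_{n+2})$ has dimension $|E(\Gc)| - (n+1)$ (the number of edges minus the number of non-source, non-sink vertices, i.e.\ minus the rank of the relevant incidence constraints), and that for each leaf $G[{\bf j+1}]({\bf c})$ the graph is connected with all the net-flow routed through, so $\mathcal F_{G[{\bf j+1}]({\bf c})}(e_1 - e_{n+2})$ has the same dimension $|E(G[{\bf j+1}]({\bf c}))| - (n+1) = |E(\Gc)| - (n+1)$, since reductions preserve the edge count. Alternatively, and more cleanly, I would note that interior-disjoint pieces whose union is the full-dimensional polytope $\mathcal F_{\Gc}(e_1-e_{n+2})$ must themselves be full-dimensional (a lower-dimensional piece has empty interior, so contributes nothing to the interior of the union), so this bullet is a formal consequence of the third bullet once we know $\mathcal F_{\Gc}(e_1-e_{n+2})$ is full-dimensional; the latter follows because ${\bf c} \in \Z^n_{>0}$ guarantees a strictly positive flow exists (route one unit from $0$ along each $(0,i)$ into the network), which I would spell out briefly. \textbf{The main obstacle} I anticipate is making the node-bijection between $R_G^{\bf c}$ and $R_G$ rigorous: one must verify carefully that the reduction operation commutes with adding the source edges — concretely, that $(G({\bf c}))^{(i)}_T(\mathcal{I}_i,\mathcal{O}_i) = (G^{(i)}_T(\mathcal{I}_i,\mathcal{O}_i))({\bf c})$ as multigraphs — and that the ordering conventions (edges sorted by decreasing length) used in both Definition~\ref{def:rg1} and Definition~\ref{def:rg2} genuinely agree on the $E(G)$-part, so that the trees $\mathcal{T}_{\mathcal{I}_i \cup \{i\}, \mathcal{O}_i}$ indexing the children are literally the same set in both reduction trees. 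Once that bookkeeping is in place, everything else is a direct transcription of Theorem~\ref{thm:a} plus Lemma~\ref{lem:sub}.
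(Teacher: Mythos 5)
Your primary argument follows the paper's proof quite closely: the paper likewise observes that Definitions \ref{def:rg1} and \ref{def:rg2} are set up precisely so that the reductions in $R_G^{\bf c}$ mirror those in $R_G$ (your claim that $(G^{(i)}_T(\mathcal{I}_i,\mathcal{O}_i))({\bf c}) = (G({\bf c}))^{(i)}_T(\mathcal{I}_i,\mathcal{O}_i)$ is exactly the bookkeeping the paper leaves implicit), and it appeals to Theorem~\ref{thm:a} together with Lemma~\ref{lem:sub} for the first three bullets. For the last bullet the paper's argument is also your first one: each $\mathcal F_{H}(e_1-e_{n+2})$ with $H$ on $[0,n+1]$ has dimension $|E(H)|-|V(H)|+1$, reductions preserve the edge count and the vertex set, and the equality follows. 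Everything here is right, and you are correctly filling in the inductive detail the paper compresses into ``set up so that.''

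Your ``alternatively, and more cleanly'' argument for the last bullet, however, does not work and you should discard it. It is false that interior-disjoint pieces whose union is a full-dimensional polytope must all be full-dimensional: a lower-dimensional piece has empty interior, so it is interior-disjoint from everything and adds nothing to the union, yet it can still sit inside such a cover. Concretely, $\{[0,1]^2, \{0\}\times[0,1]\}$ is an interior-disjoint family whose union is the full-dimensional square $[0,1]^2$, but the second piece is one-dimensional. In fact this is exactly the danger lurking here: the reductions in $R_G^{\bf c}$ take place at vertices with netflow $0$ (the netflow $e_1-e_{n+2}$ vanishes at $1,\dots,n$), and Lemma~\ref{lem:??} warns that applying Lemma~\ref{lem:sub} at a zero-netflow vertex generically \emph{does} produce lower-dimensional pieces. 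What saves the theorem is that in Definition~\ref{def:rg2} the source edges $(0,i)$ are withheld from $\mathcal{I}_i$, so Lemma~\ref{lem:??} does not apply directly, and the equidimensionality must be checked by a genuine dimension count as you did first. So the fourth bullet is not a formal corollary of the third; keep the edge-count argument and delete the ``alternatively'' paragraph.
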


Before proceeding with the proof of Theorem \ref{thm:dis}, we illustrate it with an example. Compare this to Example \ref{ex:1}.

\begin{example} \label{ex:2} Let us apply Theorem \ref{thm:dis} to the reduction tree $R_{K_4}^{(3,2,2)}$ from Figure~\ref{fig:complete}. Here $n=3$ and $m=6$. 
We have that $(\outdeg_{K_4}(1), \outdeg_{K_4}(2), \outdeg_{K_4}(3))=(3,2,1)$ and thus, $(\lout_{K_4}(1), \lout_{K_4}(2), \lout_{K_4}(3))=(2,1,0)$. The sum in Theorem  \ref{thm:dis}  is over  compositions ${\bf j}=(j_1,j_2,j_3)$ of $
3$ that are $\geq (2,1,0)$ in dominance order. Thus, the two possible ${\bf j}$'s are $(3,0,0)$ and $(2,1,0)$. The first two points of Theorem  \ref{thm:dis} thus state that there are $K_{K_4}(3 - 2, 0-1, 0-0, 0)=1$   leaves of $R_{K_4}^{(3,2,2)}$ that  are   ${G[4,1,1]}(3,2,2)$ and  $K_{K_4}(2 - 2, 1-1, 0-0, 0)=1$   leaves of $R_{K_4}^{(3,2,2)}$ that  are   ${G[3,2,1]}(3,2,2)$; see Figure~\ref{fig:complete}. 

\end{example}

\noindent {\it Proof of Theorem \ref{thm:dis}} Definitions \ref{def:rg1} and \ref{def:rg2} are set up so that appealing to Lemma \ref{lem:sub} and Theorem \ref{thm:a} instantly implies the first three statements of Theorem \ref{thm:dis}.   It remains to show that  the polytopes $\mathcal F_{G[{\bf j +1}]({\bf c})}({e_1-e_{n+2}})$    are of the same dimension as $\mathcal F_{\Gc}(e_1-e_{n+2})$. Since the dimension of  $\mathcal F_{H}(e_1-e_{n+2})$, where $H$ is a graph on the vertex set $[0, n+1]$ is  $|E(H)|-|V(H)|+1$, the same dimensionality of $\mathcal F_{G[{\bf j +1}]({\bf c})}({e_1-e_{n+2}})$ and  $\mathcal F_{\Gc}(e_1-e_{n+2})$ readily follows for ${\bf j} \geq \lout_G,  j_1 + \dots + j_n = m-n.$ \qed

\begin{lemma}\label{lem:dis} Fix  ${\bf c} \in \Z^n_{>0}$.  There is a dissection of $\mathcal F_{G[{\bf j +1}]({\bf c})}(e_1 - e_{n+2})$ into $\frac{(c_1)_{j_1}}{j_1!}\dots\frac{(c_n)_{j_n}}{j_n!}$ many unimodular simplices.
\end{lemma}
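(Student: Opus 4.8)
The plan is to produce the dissection by iterating the Subdivision Lemma (Lemma~\ref{lem:sub}): reduce $\mathcal F_{G[{\bf j +1}]({\bf c})}(e_1-e_{n+2})$ at each internal vertex $i\in\{1,\dots,n\}$ of $G[{\bf j +1}]({\bf c})$ (these are the vertices allowed by Lemma~\ref{lem:sub} once $[0,n+1]$ is relabelled $[n+2]$), taking at vertex $i$ the full multiset $\mathcal I_i$ of incoming edges at $i$ (the $c_i$ copies of $(0,i)$) and the full multiset $\mathcal O_i$ of outgoing edges at $i$ (the $j_i+1$ copies of $(i,n+1)$), with any fixed orderings. Since the edges involved at distinct vertices are disjoint, the order of these $n$ reductions is immaterial and the leaves of the resulting reduction tree are indexed by tuples $T=(T_1,\dots,T_n)$ with $T_i\in\mathcal T_{\mathcal I_i\cup\{i\},\mathcal O_i}$; by repeated application of Lemma~\ref{lem:sub} the corresponding polytopes are interior disjoint and their union is $\mathcal F_{G[{\bf j +1}]({\bf c})}(e_1-e_{n+2})$.

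Next I would identify the leaf graph $H_T$. Write $d_i:=\deg_{T_i}(i)$ for the degree of the left vertex $i$ in $T_i$. Each tree-edge of $T_i$ incident to $i$ yields an edge $(i,n+1)$ of $H_T$, and each of the remaining $c_i+j_i+1-d_i$ tree-edges of $T_i$ yields an edge $(0,n+1)$; so $H_T$ consists of $d_i$ copies of $(i,n+1)$ and $c_i+j_i+1-d_i$ copies of $(0,n+1)$ for each $i$, and nothing else. Because every internal vertex $i$ of $H_T$ has indegree $0$, flow conservation forces every edge $(i,n+1)$ to carry flow $0$ on all of $\mathcal F_{H_T}(e_1-e_{n+2})$, so this polytope is integrally equivalent to the standard simplex $\Delta^{k-1}$, where $k:=\sum_{i=1}^n(c_i+j_i+1-d_i)$ counts the edges $(0,n+1)$ of $H_T$; in particular every leaf is a unimodular simplex, of dimension $k-1$.

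The heart of the argument is the identity $k-1=(\sum_i c_i+\sum_i j_i-1)-\sum_i(d_i-1)$. Here $\sum_i c_i+\sum_i j_i-1=|E|-|V|+1=\dim\mathcal F_{G[{\bf j +1}]({\bf c})}(e_1-e_{n+2})$ (computed as in the proof of Theorem~\ref{thm:dis}), and $d_i\ge 1$ since $i$ is a vertex of the tree $T_i$. Hence $\mathcal F_{H_T}(e_1-e_{n+2})$ is full dimensional exactly when $d_i=1$ for every $i$, and strictly lower dimensional otherwise. The lower-dimensional leaves thus make up a finite union of polytopes of smaller dimension, a nowhere dense subset of $\mathcal F_{G[{\bf j +1}]({\bf c})}(e_1-e_{n+2})$; so the full-dimensional leaves already have union $\mathcal F_{G[{\bf j +1}]({\bf c})}(e_1-e_{n+2})$, and being pairwise interior-disjoint unimodular simplices they form a dissection. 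It remains to count the tuples with $d_i=1$ for all $i$, i.e.\ to compute $\prod_{i=1}^n\#\{T_i\in\mathcal T_{\mathcal I_i\cup\{i\},\mathcal O_i}:\deg_{T_i}(i)=1\}$.

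Finally, as $i$ is the last element of the ordered left set $\mathcal I_i\cup\{i\}$, the noncrossing condition forces any $T_i$ with $\deg_{T_i}(i)=1$ to attach $i$, as a leaf, to the last element of $\mathcal O_i$; deleting that leaf gives a bijection onto $\mathcal T_{\mathcal I_i,\mathcal O_i}$, so $\#\{T_i:\deg_{T_i}(i)=1\}=\#\mathcal T_{\mathcal I_i,\mathcal O_i}=\binom{c_i+(j_i+1)-2}{c_i-1}=\binom{c_i+j_i-1}{j_i}=\multiset{c_i}{j_i}$, which is the $i$-th factor $\frac{(c_i)_{j_i}}{j_i!}$ of the claimed count, and multiplying over $i$ finishes the proof. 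I expect the main obstacle to be the insight behind the dimension identity: a single reduction at vertex $i$ produces strictly more pieces than the normalized volume of the polytope allows, and the surplus is exactly the collection of degenerate leaves with $\deg_{T_i}(i)\ge 2$, which must be discarded before the count comes out right — once this is seen, the rest is routine bookkeeping with noncrossing trees.
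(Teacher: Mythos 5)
Your argument follows the same path as the paper's proof: iterate the Subdivision Lemma at the internal vertices $1,\dots,n$ of $G[{\bf j+1}]({\bf c})$, identify the full-dimensional leaves as exactly those tuples $(T_1,\dots,T_n)$ with $\deg_{T_i}(i)=1$ for all $i$, and count them via the bijection (remove the forced leaf edge at $i$) onto $\mathcal T_{\mathcal I_i,\mathcal O_i}\times\cdots$. The only difference is that the paper invokes Lemma~\ref{lem:??} (\cite[Lemma~3.4]{MMlidskii}) directly for the ``exactly those with $\deg_{T_i}(i)=1$ are top-dimensional and dissect'' step, while you rederive that fact from scratch by computing the dimension $k-1=\sum_i(c_i+j_i+1-d_i)-1$ of each leaf explicitly and comparing to $|E|-|V|+1$; your explicit description of each leaf as a standard simplex also gives a self-contained reason for unimodularity where the paper relies on the reader knowing that leaves of such reduction trees are unimodular simplices. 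One small remark: you correctly use $\#\mathcal T_{\mathcal I_i,\mathcal O_i}=\binom{c_i+j_i-1}{j_i}=\multiset{c_i}{j_i}$, which is the \emph{rising}-factorial reading of $\tfrac{(c_i)_{j_i}}{j_i!}$ consistent with Theorem~\ref{thm:main-sec-4}; the falling-factorial convention stated just before Lemma~\ref{lem:dis} is a typo in the paper.
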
 

Before giving a proof of Lemma \ref{lem:dis} we include a version of the Subdivision Lemma appearing in  \cite[Lemma 3.4]{MMlidskii}.

\begin{lemma} \label{lem:??}  \cite[Lemma 3.4]{MMlidskii} Let  $\{\F_{G^{(i)}_T(\mathcal{I}_i, \mathcal{O}_i)}(\g)\}_{T \in \mathcal{T}_{\mathcal{I}_i \cup \{i\}, \mathcal{O}_i}}$ as in Lemma \ref{lem:sub}. Let  $\mathcal{I}_i, \mathcal{O}_i$ be the multiset of incoming and outgoing edges incident to $i$, with fixed arbitrary ordering. Assume that $a_i=0$. Then, it is exactly those of the polytopes  among $\{\F_{G^{(i)}_T(\mathcal{I}_i, \mathcal{O}_i)}(\g)\}_{T \in \mathcal{T}_{\mathcal{I}_i \cup \{i\}, \mathcal{O}_i}}$ that  are of the same dimension as $\F_{G}(\g)$ for which  there is exactly one edge incident to $i$ in $T \in  \mathcal{T}_{\mathcal{I}_i \cup \{i\}, \mathcal{O}_i}$. Such polytopes form a dissection of  $\F_{G}(\g)$.
\end{lemma}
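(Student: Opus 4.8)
Write $G_T:=G^{(i)}_T(\mathcal{I}_i,\mathcal{O}_i)$ for short, and set $P_T:=\phi_T(\F_{G_T}(\g))\subseteq\F_G(\g)$. The plan is to describe each $P_T$ explicitly as a subpolytope of $\F_G(\g)$ and read off its dimension. Deleting the incoming and outgoing edges at $i$ and adjoining one new edge per tree-edge of $T$ leaves $|E(G_T)|=|E(G)|$, so $\phi_T$ is a lattice-preserving linear isomorphism and $\dim P_T=\dim\F_{G_T}(\g)$; hence it suffices to decide for which $T$ the subpolytope $P_T$ is full-dimensional in $\F_G(\g)$. For clarity I will run the argument assuming $\F_G(\g)$ is full-dimensional in $\R^{|E(G)|}$ — equivalently, that every edge of $G$ carries positive flow for some flow in $\F_G(\g)$ — which is the case in all our applications; the general case is handled the same way inside $\aff(\F_G(\g))$.

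First I would record the shape of $T$. Write $\mathcal{I}_i=(e_1^{(1)},\dots,e_1^{(p)})$ and $\mathcal{O}_i=(e_2^{(1)},\dots,e_2^{(q)})$ and let $k$ be the number of tree-edges of $T$ at $i$. Because $i$ is the last left vertex, the noncrossing condition forces the $T$-neighbours of $i$ to be exactly the last $k$ outgoing edges $e_2^{(q-k+1)},\dots,e_2^{(q)}$ (otherwise some outgoing edge beyond the least $i$-neighbour would be left unspanned), and so exhibits $T\setminus\{i\}$ as a forest with exactly $k$ components: one ``large'' component on $\mathcal{I}_i\cup\{e_2^{(1)},\dots,e_2^{(q-k+1)}\}$ and the singletons $\{e_2^{(q)}\},\dots,\{e_2^{(q-k+2)}\}$. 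Next I would identify $P_T$: since $a_i=0$ and $i$ has indegree $0$ in $G_T$, every new edge $edge(i,(i,j))$ carries flow $0$, so unwinding $\phi_T$ gives that $P_T$ is precisely the set of $f\in\F_G(\g)$ for which the unique transportation array supported on $T\setminus\{i\}$ with row sums $(f_e)_{e\in\mathcal{I}_i}$ and column sums $(f_e)_{e\in\mathcal{O}_i}$ exists and has all entries $\ge 0$.

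Now the dimension dichotomy. If $k\ge 2$, the singleton component $\{e_2^{(q)}\}$ imposes on $P_T$ the equation $f_{e_2^{(q)}}=0$; since $\F_G(\g)$ is full-dimensional this is a proper face, so $\dim P_T<\dim\F_G(\g)$. If $k=1$, then $T\setminus\{i\}\in\mathcal{T}_{\mathcal{I}_i,\mathcal{O}_i}$ is a noncrossing bipartite spanning tree and no coordinate is forced to vanish; here $P_T$ is integrally equivalent to $\F_{H_T}(\g')$, where $H_T$ is obtained from $G$ by deleting $i$ together with its incident edges and adjoining, for each tree-edge $(e_1^{(a)},e_2^{(b)})$ of $T\setminus\{i\}$, the edge from the tail of $e_1^{(a)}$ to the head of $e_2^{(b)}$, and $\g'$ is $\g$ with its $i$-th entry removed. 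One checks that every edge of $H_T$ still carries positive flow for some flow in $\F_{H_T}(\g')$ — here the noncrossing hypothesis enters, together with the fact that edges of $G$ go from smaller to larger vertices, so the structure of $G$ below and above level $i$ is untouched — whence $\F_{H_T}(\g')$ is full-dimensional of dimension $|E(G)|-1-(n-1)=|E(G)|-n=\dim\F_G(\g)$. Finally, for the dissection statement: Lemma \ref{lem:sub} gives that $\{P_T\}_T$ are interior disjoint with $\bigcup_T P_T=\F_G(\g)$; since the pieces with $k\ge 2$ have dimension strictly less than $\dim\F_G(\g)$, their union is a measure-zero subset of the (tautologically full-dimensional) affine span of $\F_G(\g)$, so $\F_G(\g)\setminus\bigcup_{k=1}P_T$ is an open subset of $\F_G(\g)$ of measure zero, hence empty; thus the pieces with $k=1$ cover $\F_G(\g)$ and, being interior disjoint and full-dimensional, dissect it.

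The step I expect to be the main obstacle is the full-dimensionality of the $k=1$ pieces — equivalently, that every edge of $H_T$ is flow-carrying, or that the transportation along a noncrossing bipartite tree can be made strictly positive while still being the restriction to level $i$ of an honest flow in $\F_G(\g)$. This is essentially the statement that noncrossing bipartite spanning trees index the top cells of the staircase triangulation of a product of two simplices, transferred through the flow-polytope structure; making that transfer precise, together with the bookkeeping needed to reduce the general case to the full-dimensional one (graphs with non-flow-carrying edges), is where the real work lies. An alternative, more self-contained route to this step is induction on $|\mathcal{O}_i|$, peeling off the outgoing edge $e_2^{(q)}$.
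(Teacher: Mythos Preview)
The paper does not prove this lemma: it is stated with a citation to \cite[Lemma 3.4]{MMlidskii} and immediately followed by the proof of Lemma~\ref{lem:dis}, so there is no ``paper's own proof'' to compare against. Your proposal is therefore a from-scratch argument, and it is broadly on the right track, with one small slip and one gap you already flag.

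The slip: $\F_G(\g)$ is \emph{never} full-dimensional in $\R^{|E(G)|}$ for $n\ge 1$, since the $n$ independent netflow equations cut the dimension down to at most $|E(G)|-n$. What you actually want (and what your ``equivalently'' clause correctly describes) is that $\dim\F_G(\g)=|E(G)|-n$, i.e.\ full-dimensionality inside the affine flow space. Your subsequent dimension counts are consistent with this reading, so just fix the wording.

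Your structural analysis of $T$ is correct: the noncrossing condition forces the neighbours of the last left vertex $i$ to be the final block $e_2^{(q-k+1)},\dots,e_2^{(q)}$, and for $k\ge 2$ the edges $e_2^{(q)},\dots,e_2^{(q-k+2)}$ become isolated in $T\setminus\{i\}$, forcing their flows to vanish under $\phi_T$. A slightly cleaner way to package the dimension drop is to note that in $G_T$ the vertex $i$ has indegree $0$ and outdegree $k$; since $a_i=0$ all $k$ outgoing edges carry zero flow, so $\F_{G_T}(\g)$ is integrally equivalent to a flow polytope on a graph with $|E(G)|-k$ edges and $n$ vertices, hence of dimension at most $|E(G)|-n-(k-1)$.

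The genuine work, as you say, is the $k=1$ case: that each noncrossing bipartite tree on $\mathcal I_i\times\mathcal O_i$ yields a piece of the same dimension as $\F_G(\g)$. Your measure-zero argument for the dissection is fine once this is established. The induction on $|\mathcal O_i|$ you propose (peeling off the last outgoing edge) is indeed how one typically proves this, and it matches the inductive structure used in \cite{MMlidskii}; the product-of-simplices interpretation is essentially equivalent. So your plan is sound, but the proposal as written is a sketch rather than a proof until that step is carried out.
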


\noindent {\it Proof of Lemma \ref{lem:dis}.}  Repeatedly use Lemma \ref{lem:??} for the case of netflow vector coordinate value $0$   on the vertices $1,2, \ldots, n$ of $G[{\bf j +1}]({\bf c})$. This amounts to picking tuples of bipartite noncrossing trees $(T_1,\ldots, T_n) \in \mathcal{T}_{L_1,R_1}\times \cdots \times \mathcal{T}_{L_n,R_n}$, where $|L_i|=c_i$ and $|R_i|=j_i+1$, for $i \in [n]$.  The number of such tuples is $\frac{(c_1)_{j_1}}{j_1!}\dots\frac{(c_n)_{j_n}}{j_n!},$ since $\frac{(c_i)_{j_i}}{j_i!} =\#
\mathcal{T}_{L_i,R_i}$.\qed

\begin{theorem}\label{dis:simp} Fix  ${\bf c} \in \Z^n_{>0}$.  There is a dissection of $\mathcal F_{\Gc}(e_1 - e_{n+2})$ into  $$\sum_{\substack{{\bf j} \geq \lout_G \\ j_1 + \dots + j_n = m-n}} \frac{(c_1)_{j_1}}{j_1!}\dots\frac{(c_n)_{j_n}}{j_n!} K_G(j_1 -  \lout_{G}(1), \dots, j_n -  \lout_{G}(n), 0)$$ many unimodular simplices. \end{theorem}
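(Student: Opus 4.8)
The plan is to combine Theorem~\ref{thm:dis} and Lemma~\ref{lem:dis} directly: Theorem~\ref{thm:dis} dissects $\mathcal F_{\Gc}(e_1-e_{n+2})$ into the (interior-disjoint, full-dimensional) polytopes $\mathcal F_{G[{\bf j+1}]({\bf c})}(e_1-e_{n+2})$, with $K_G(j_1-\lout_G(1),\dots,j_n-\lout_G(n),0)$ of the leaves of $R_G^{\bf c}$ equal to each $G[{\bf j+1}]({\bf c})$, the sum running over weak compositions ${\bf j}$ of $m-n$ with ${\bf j}\geq \lout_G$ in dominance order. Then Lemma~\ref{lem:dis} further dissects each such polytope $\mathcal F_{G[{\bf j+1}]({\bf c})}(e_1-e_{n+2})$ into $\frac{(c_1)_{j_1}}{j_1!}\cdots\frac{(c_n)_{j_n}}{j_n!}$ unimodular simplices. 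Concatenating these two dissections and summing the counts over all leaves of $R_G^{\bf c}$ gives exactly $\sum_{{\bf j}\geq \lout_G,\ j_1+\cdots+j_n=m-n}\frac{(c_1)_{j_1}}{j_1!}\cdots\frac{(c_n)_{j_n}}{j_n!}\,K_G(j_1-\lout_G(1),\dots,j_n-\lout_G(n),0)$ unimodular simplices, as claimed.

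The only point that needs a word of care is that ``dissection'' composes: refining each piece of an interior-disjoint cover whose union is $P$ by an interior-disjoint cover of that piece again yields an interior-disjoint cover of $P$. Since the simplices produced in Lemma~\ref{lem:dis} sit inside a fixed $\mathcal F_{G[{\bf j+1}]({\bf c})}(e_1-e_{n+2})$ which is full-dimensional in $\mathcal F_{\Gc}(e_1-e_{n+2})$, no dimension is lost, and simplices coming from distinct leaves $G[{\bf j+1}]({\bf c})$ are interior-disjoint because the corresponding flow polytopes are. Unimodularity is inherited verbatim from Lemma~\ref{lem:dis}, since the maps $\phi_T$ of Lemma~\ref{lem:sub} are integral equivalences. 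I expect this to be the only nontrivial bookkeeping step, and it is essentially formal. The proof is therefore short:

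\begin{proof}
By Theorem~\ref{thm:dis}, the polytopes $\mathcal F_{G[{\bf j+1}]({\bf c})}(e_1-e_{n+2})$, as $G[{\bf j+1}]({\bf c})$ ranges over the leaves of $R_G^{\bf c}$, are interior disjoint, each of the same dimension as $\mathcal F_{\Gc}(e_1-e_{n+2})$, and their union is $\mathcal F_{\Gc}(e_1-e_{n+2})$; moreover there are exactly $K_G(j_1-\lout_G(1),\dots,j_n-\lout_G(n),0)$ leaves equal to $G[{\bf j+1}]({\bf c})$, the compositions ${\bf j}$ ranging over weak compositions of $m-n$ with ${\bf j}\geq \lout_G$ in dominance order. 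By Lemma~\ref{lem:dis}, each $\mathcal F_{G[{\bf j+1}]({\bf c})}(e_1-e_{n+2})$ dissects into $\frac{(c_1)_{j_1}}{j_1!}\cdots\frac{(c_n)_{j_n}}{j_n!}$ unimodular simplices. Since the pieces $\mathcal F_{G[{\bf j+1}]({\bf c})}(e_1-e_{n+2})$ are full-dimensional and interior disjoint in $\mathcal F_{\Gc}(e_1-e_{n+2})$, the collection of all these simplices, taken over all leaves of $R_G^{\bf c}$, is interior disjoint and has union $\mathcal F_{\Gc}(e_1-e_{n+2})$; that is, it is a dissection into unimodular simplices. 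Counting, the total number of simplices is
\[
\sum_{\substack{{\bf j} \geq \lout_G \\ j_1 + \dots + j_n = m-n}} \frac{(c_1)_{j_1}}{j_1!}\cdots\frac{(c_n)_{j_n}}{j_n!}\, K_G(j_1 -  \lout_{G}(1), \dots, j_n -  \lout_{G}(n), 0),
\]
where the factor $K_G(j_1 -  \lout_{G}(1), \dots, j_n -  \lout_{G}(n), 0)$ accounts for the number of leaves of $R_G^{\bf c}$ equal to $G[{\bf j+1}]({\bf c})$.
\end{proof}
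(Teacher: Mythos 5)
Your proof is correct and follows exactly the paper's argument: the paper's own proof is the one-liner ``Follows readily from Theorem~\ref{thm:dis} and Lemma~\ref{lem:dis},'' and you have simply spelled out the (standard) bookkeeping that dissections compose. Nothing is missing and nothing is genuinely different.
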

\proof Follows readily from Theorem \ref{thm:dis} and Lemma \ref{lem:dis}.\qed

\section{The geometric proof of   \eqref{eq:kost}}
\label{sec:geom}

In this section we prove the Baldoni--Vergne--Lidskii integer point formula \eqref{eq:kost}  from Theorem~\ref{thmA}. As mentioned in the Introduction, the original proof by Baldoni and Vergne \cite{bv}  relies on residue calculations and a second, combinatorial  proof by M\'esz\'aros and Morales \cite{MMlidskii}  makes use of a canonical subdivision of flow polytopes and   generating functions of Kostant partition functions to prove \eqref{eq:kost}. In contrast, here we give a purely geometric proof of \eqref{eq:kost}.  For the reader's reference we rewrite  \eqref{eq:kost}  in Theorem \ref{thm:main-sec-4}  in the form that we prove it:

\begin{theorem}
\label{thm:main-sec-4}
Let $G$ be a connected graph on vertex set $[n+1]$ so that $G$ has at least one outgoing edge at vertex $i$ for $i \leq n$. For $i \in [n]$ we set  $ \inn_{G}(i) \colonequals \indeg_G(i) - 1$ and $ \lout_{G}(i)\colonequals \outdeg_G(i) - 1$. Fix positive integers $c_1, \dots, c_n$. Let $a_i\colonequals  \inn_{G}(i) + c_i$. Then we have
$$K_G\B(a_1, \dots, a_n, -\sum_{i=1}^na_i\B) = \sum_{\substack{{\bf j} \geq \lout_G \\ j_1 + \dots + j_n = m-n}} \frac{(c_1)_{j_1}}{j_1!}\dots\frac{(c_n)_{j_n}}{j_n!} K_G(j_1 -  \lout_{G}(1), \dots, j_n -  \lout_{G}(n), 0),$$
where $\geq$ denotes the dominance order, that is, $j_1 + \dots + j_k \geq  \lout_{G}(1) + \dots + (\lout_G)_k$ for all $k \in [n]$, and $(n)_k \colonequals n(n+1)\dots(n+k-1)$.
\end{theorem}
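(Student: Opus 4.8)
The strategy is to identify both sides of the claimed identity with geometric quantities attached to the single flow polytope $\mathcal F_{\Gc}(e_1 - e_{n+2})$, and then invoke the dissection results already assembled in Section~\ref{sec:?}. Concretely, the plan is to show that the left-hand side is a \emph{volume} and the right-hand side is the \emph{number of unimodular simplices in a dissection of that same polytope}, so that the two agree automatically.

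\textbf{Step 1: the left-hand side as a volume.} First I would observe that $\mathcal F_{\Gc}(e_1 - e_{n+2})$ is the flow polytope of a graph with a single source $0$ carrying unit netflow, so by the standard ``splitting the unit flow at the source'' argument (or directly from Lemma~\ref{lem:in-vector} combined with the Ehrhart-leading-term discussion in the Introduction) its normalized volume equals the number of lattice points of a flow polytope on $\Gc$ with netflow $(0, \inn_{\Gc}(1),\dots,\inn_{\Gc}(n), -\sum \inn_{\Gc}(i))$. By Lemma~\ref{lem:in-vector} this lattice-point count is exactly $K_G(a_1,\dots,a_n,-\sum a_i)$ with $a_i = \inn_G(i) + c_i$. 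Thus
\[
\vol \mathcal F_{\Gc}(e_1 - e_{n+2}) = K_G\B(a_1,\dots,a_n,-\sum_{i=1}^n a_i\B).
\]
The one technical point to verify carefully here is that the relevant dilation/contraction between $\mathcal F_{\Gc}(e_1-e_{n+2})$ and $\mathcal F_{\Gc}(0,\dots)$ is the right one: one must check that adding the $c_i$ edges $(0,i)$ and pushing a unit of flow through vertex $0$ recovers precisely the netflow $\inn_{\Gc}(i)$ at each vertex $i$, which is immediate from $\inn_{\Gc}(i) = \inn_G(i) + c_i = a_i$, exactly as in the proof of Lemma~\ref{lem:in-vector}.

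\textbf{Step 2: the right-hand side as a simplex count, and conclusion.} Next I would apply Theorem~\ref{dis:simp}, which dissects $\mathcal F_{\Gc}(e_1 - e_{n+2})$ into exactly
\[
\sum_{\substack{{\bf j} \geq \lout_G \\ j_1 + \dots + j_n = m-n}} \frac{(c_1)_{j_1}}{j_1!}\cdots\frac{(c_n)_{j_n}}{j_n!}\, K_G(j_1 - \lout_G(1), \dots, j_n - \lout_G(n), 0)
\]
unimodular simplices. Since a dissection into unimodular simplices computes the normalized volume (the simplices are interior-disjoint, each has normalized volume $1$, and they cover the polytope), the number of simplices equals $\vol \mathcal F_{\Gc}(e_1 - e_{n+2})$. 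Combining with Step~1 gives the theorem. Here I should be slightly careful that ``dissection'' (as opposed to ``triangulation'') still suffices for the volume computation — but interior-disjointness plus covering is exactly what is needed, and this is precisely the content built into Lemmas~\ref{lem:sub}, \ref{lem:??} and Theorem~\ref{thm:dis}.

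\textbf{Expected main obstacle.} The genuinely substantive point — and the one I would spend the most care on — is Step~1: making the identification $\vol \mathcal F_{\Gc}(e_1 - e_{n+2}) = K_G(a_1,\dots,a_n,-\sum a_i)$ fully rigorous. This is where the ``magic'' of the Baldoni--Vergne formula lives: a volume on one graph equals a lattice-point count on (essentially) another. The cleanest route is probably to note that $\mathcal F_{\Gc}(e_1-e_{n+2})$, having unit netflow at the source, is \emph{integrally equivalent} (after the canonical subdivision / Theorem~\ref{thm:dis}) to a union of pieces each of which is a cone-like flow polytope whose volume is a product of the $\frac{(c_i)_{j_i}}{j_i!}$ factors times a smaller Kostant number; but the more conceptual argument is to invoke the general principle that for a graph with a single unit-netflow source, $\vol \mathcal F$ equals the number of integer points of the ``derivative'' polytope obtained by replacing the source's outgoing multiplicities appropriately — which is exactly the passage from $\mathcal F_{\Gc}(e_1-e_{n+2})$ to the setting of Lemma~\ref{lem:in-vector}. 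Everything else (Step~2) is a direct citation of Theorem~\ref{dis:simp}.
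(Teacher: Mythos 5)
Your overall plan has the same three-part architecture as the paper's proof: reinterpret the left-hand side as $\vol \mathcal F_{\Gc}(e_1-e_{n+2})$, reinterpret the right-hand side as the simplex count of the dissection from Theorem~\ref{dis:simp}, and equate. Step~2 is correct and matches the paper exactly. The problem is entirely in Step~1, and specifically in the leg you correctly flag as the ``genuinely substantive point'' but then fail to close.

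The identity you need in Step~1 is
\[
\vol \mathcal F_{\Gc}(e_1-e_{n+2}) \;=\; K_{\Gc}\B(0,\inn_{\Gc}(1),\dots,\inn_{\Gc}(n),-\sum_{i=1}^n\inn_{\Gc}(i)\B),
\]
a \emph{volume equals lattice-point-count} statement. Your two proposed justifications do not give it. Lemma~\ref{lem:in-vector} is a bijection between two sets of integer flows, so it only relates two lattice-point counts; it says nothing about a volume, and the ``Ehrhart-leading-term discussion'' only tells you $\vol$ is the leading coefficient of an Ehrhart polynomial --- it does not hand you this particular Kostant evaluation. And your alternative route at the end (integrally equivalent to a union of pieces whose volumes are products of $\frac{(c_i)_{j_i}}{j_i!}$ times a Kostant number) is circular: that is exactly the computation of the right-hand side via Theorem~\ref{dis:simp}, so it cannot independently produce the left-hand side. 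The missing ingredient is the Baldoni--Vergne/Lidskii volume formula~\eqref{eq:vol} itself, applied to $\Gc$ with netflow $e_1-e_{n+2}$: since the source carries unit netflow and all other non-sink vertices carry netflow $0$, exactly one term of~\eqref{eq:vol} survives and yields the displayed Kostant value. The paper invokes~\eqref{eq:vol} at precisely this point, and emphasizes that the proof of~\eqref{eq:vol} given in~\cite{MMlidskii} via the canonical subdivision is fully geometric --- which is what makes the overall argument a genuinely geometric proof. Your intuition (``splitting the unit flow at the source'') is indeed the right heuristic for this formula, but in a rigorous write-up you must cite~\eqref{eq:vol} (or its geometric proof) rather than Lemma~\ref{lem:in-vector}; without that citation Step~1 has a real gap.

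Once Step~1 is corrected by invoking~\eqref{eq:vol}, your proof coincides with the paper's: Lemma~\ref{lem:in-vector} then converts $K_{\Gc}(0,\inn_{\Gc}(1),\dots)$ into $K_G(a_1,\dots,a_n,-\sum a_i)$, and Theorem~\ref{dis:simp} supplies the simplex count for the right-hand side.
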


\proof   Given a vector ${\bf c}:=(c_1, \dots, c_n) \in \Z^n_{>0}$  and a graph $G$ on the vertex set $[n+1]$, we defined  $G(\bf c)$ on the vertex set $[0,n+1]$ so that 
$$K_G\B(a_1, \dots, a_n, -\sum_{i=1}^na_i\B) = K_{\Gc}\B(0,  \inn_{\Gc}(1), \dots,  \inn_{\Gc}(n), -\sum_{i=1}^n  \inn_{\Gc}(i)\B),$$
where $a_i \colonequals  \inn_{G}(i) + c_i$. See Definition \ref{def:c} and Lemma \ref{lem:in-vector}. 

  By \eqref{eq:vol}  the normalized volume of $\mathcal F_{\Gc}(e_1 - e_{n+2})$ is precisely $$K_{\Gc}(0,  \inn_{\Gc}(1), \dots,  \inn_{\Gc}(n), -\sum_{i=1}^n  \inn_{\Gc}(i)).$$ (Recall that the proof of  \eqref{eq:vol} given in \cite{MMlidskii} via the canonical subdivision is fully geometric.) In particular, the number of simplices in a unimodular triangulation of $\mathcal F_{\Gc}(e_1 - e_{n+2})$ is $$K_{\Gc}(0,  \inn_{\Gc}(1), \dots,  \inn_{\Gc}(n), -\sum_{i=1}^n  \inn_{\Gc}(i)).$$

By Theorem \ref{dis:simp} there is a dissection  of  $\mathcal F_{\Gc}(e_1 - e_{n+2})$ into $$\sum_{\substack{{\bf j} \geq \lout_G \\ j_1 + \dots + j_n = m-n}} \frac{(c_1)_{j_1}}{j_1!}\dots\frac{(c_n)_{j_n}}{j_n!} K_G(j_1 -  \lout_{G}(1), \dots, j_n -  \lout_{G}(n), 0)$$ many unimodular simplices. 
\bigskip

Thus, chaining all the equalities we get that
\begin{align*}
K_G\B(a_1, \dots, a_n, -\sum_{i=1}^na_i\B) &= K_{\Gc}\B(0,  \inn_{\Gc}(1), \dots,  \inn_{\Gc}(n), -\sum_{i=1}^n  \inn_{\Gc}(i)\B) \\&=\sum_{\substack{{\bf j} \geq \lout_G \\ j_1 + \dots + j_n = m-n}} \frac{(c_1)_{j_1}}{j_1!}\dots\frac{(c_n)_{j_n}}{j_n!} K_G(j_1 -  \lout_{G}(1), \dots, j_n -  \lout_{G}(n), 0),
\end{align*}
to  obtain    Theorem ~\ref{thm:main-sec-4}. 
\qed

\section{Concluding remarks}
\label{sec:con}

Faced with the formulas for volume and integer point count of flow polytopes given  in  equations \eqref{eq:vol} and \eqref{eq:kost}, one instantly observes the nonnegativity of the quantities involved. Yet, the original proof of Baldoni and Vergne \cite{bv} is via residue calculations: involving complex numbers and subtractions.  

When we study manifestly nonnegative quantities, as in equations \eqref{eq:vol} and \eqref{eq:kost}, it is natural to seek a manifestly nonnegative proof: a proof devoid of subtraction (and complex numbers). A geometric proof can make this aspiration a reality. A  geometric construction was used  by  M\'esz\'aros and   
 Morales  \cite{MMlidskii}  to prove \eqref{eq:vol} in a manifestly nonnegative way and the present paper accomplishes the same goal via geometric constructions for~\eqref{eq:kost}.   
 
 \section*{Acknowledgments} 
		We are grateful to Lou Billera for inspiring conversations. We thank the Institute for Advanced Study for providing a hospitable environment for our collaboration. The first and third authors also thank the Einhorn Discovery Grant and the Cornell Mathematics Department for providing the funding for their visits to the Institute for Advanced Study.  

\bibliography{integer-points-of-flow-polytopes-arxiv}

\begin{thebibliography}{10}

\bibitem{bv}
W.~Baldoni and M.~Vergne.
\newblock {K}ostant partitions functions and flow polytopes.
\newblock {\em Transform. Groups}, 13(3-4):447--469, 2008.

\bibitem{counting}
W.~Baldoni-Silva, J.~A. De~Loera, and M.~Vergne.
\newblock Counting integer flows in networks.
\newblock {\em Found. Comput. Math.}, 4(3):277--314, 2004.

\bibitem{deloerasturm}
J.~A. De~Loera and B.~Sturmfels.
\newblock Algebraic unimodular counting.
\newblock {\em Math. Program.}, 96(2, Ser. B):183--203, 2003.
\newblock Algebraic and geometric methods in discrete optimization.

\bibitem{GMP}
S.~C.. {Gutekunst}, K.~{M\'esz\'aros}, and T.~K. {Petersen}.
\newblock {Root cones and the resonance arrangement}.
\newblock {\em ArXiv e-prints}, 2019.

\bibitem{ubiquity}
A.~N. Kirillov.
\newblock Ubiquity of {K}ostka polynomials.
\newblock In {\em Physics and combinatorics 1999 ({N}agoya)}, pages 85--200.
  World Sci. Publ., River Edge, NJ, 2001.

\bibitem{kostant2}
B.~Kostant.
\newblock A formula for the multiplicity of a weight.
\newblock {\em Proc. Nat. Acad. Sci. U.S.A.}, 44:588--589, 1958.

\bibitem{kostant1}
B.~Kostant.
\newblock A formula for the multiplicity of a weight.
\newblock {\em Trans. Amer. Math. Soc.}, 93:53--73, 1959.

\bibitem{diag}
R.~I. Liu, K.~M\'esz\'aros, and A.~H. Morales.
\newblock Flow polytopes and the space of diagonal harmonics.
\newblock {\em Canadian Journal of Mathematics}, 71(6):1495--1521, 2019.

\bibitem{mm}
K.~M\'{e}sz\'{a}ros and A.~H. Morales.
\newblock Flow polytopes of signed graphs and the {K}ostant partition function.
\newblock {\em Int. Math. Res. Not. IMRN}, (3):830--871, 2015.

\bibitem{MMlidskii}
K.~M\'esz\'aros and A.~H. Morales.
\newblock {Volumes and Ehrhart polynomials of flow polytopes}.
\newblock {\em Math. Z.,}, (293):1369--1401, 2019.

\bibitem{tesler}
K.~M\'{e}sz\'{a}ros, A.~H. Morales, and B.~Rhoades.
\newblock The polytope of {T}esler matrices.
\newblock {\em Selecta Math. (N.S.)}, 23(1):425--454, 2017.

\bibitem{groth}
K.~M\'esz\'aros and A.~St.~Dizier.
\newblock From generalized permutahedra to grothendieck polynomials via flow
  polytopes.
\newblock {\em Algebraic Combinatorics}, 3(5):1197--1229, 2020.

\bibitem{schrijver}
A~Schrijver.
\newblock {\em Combinatorial optimization. {P}olyhedra and efficiency. {V}ol.
  {B}}, volume~24 of {\em Algorithms and Combinatorics}.
\newblock Springer-Verlag, Berlin, 2003.
\newblock Matroids, trees, stable sets, Chapters 39--69.

\bibitem{sturm}
B.~Sturmfels.
\newblock On vector partition functions.
\newblock {\em J. Combin. Theory Ser. A}, 72(2):302--309, 1995.

\end{thebibliography}
\bibliographystyle{plain}

\end{document}